\newtheorem{theorem}{Theorem}[section]
\newtheorem{lemma}[theorem]{Lemma}
\DeclarePairedDelimiter{\ceil}{\lceil}{\rceil}
\newcommand{\diam}{\text{diam}}
\newcommand{\ordiam}{\overrightarrow{\text{diam}}}
\title{On the oriented diameter of near planar triangulations}
\author{
Yiwei Ge \thanks{Louisiana State University, Baton Rouge, LA, 70803
({\tt yge4@lsu.edu}).}
\and
Xiaonan Liu \thanks{Vanderbilt University, Nashville, TN, 37240
({\tt xiaonan.liu@vanderbilt.edu}).}
\and
Zhiyu Wang \thanks{Louisiana State University, Baton Rouge, LA, 70803
({\tt zhiyuw@lsu.edu}).}
}
\begin{document}

\maketitle

\begin{abstract}
In this paper, we show that the oriented diameter of any $n$-vertex $2$-connected near triangulation is at most $\ceil{\frac{n}{2}}$ (except for seven small exceptions), and the upper bound is tight. This extends a result of Wang et.al. on the oriented diameter of maximal outerplanar graphs, and improves an upper bound of $n/2+O(\sqrt{n})$ on the oriented diameter of planar triangulations by Mondal, Parthiban and Rajasingh.
\end{abstract}

\section{Introduction}\label{sec:intro}
A directed graph $D = (V(D),A(D))$ is a graph with a vertex set $V(D)$ and an edge set $A(D)$ consisting of ordered pairs of vertices, called \textit{arcs} or directed edges. We use $uv$ to denote the arc $(u,v)$, i.e., the arc oriented from $u$ to $v$. Given an undirected graph $G=(V(G),E(G))$, an \textit{orientation} of $G$ is a directed graph such that each edge in $E(G)$ is assigned an direction. Given a (directed) graph $D$ and two vertices $u,v \in V(D)$, the \textit{distance} from $u$ to $v$ in $D$, denoted by $d_D(u,v)$, is the number of edges of a shortest (directed) path from $u$ to $v$ in $D$. 
We often ignore the subscript if there is no ambiguity on the underlying graph. Given a (directed) graph $D$, the \textit{diameter} of $D$ is defined to be $\diam(D)=\max\{d_D(u,v): u,v\in V(D)\}$. 

Given an undirected graph $G$ and $v\in V(G)$, let $N_G(v)$ denote the neighborhood of $v$ in $G$, and let $N_G[v]:= N_G(v)\cup \{v\}$. An edge $e \in E(G)$ is called a \textit{bridge} if $G-e$ is disconnected. A graph is called \textit{bridgeless} if contains no bridge. 

A directed graph $D$ is called \textit{strongly connected} if for any two vertices $u,v \in V(D)$, there exists a directed path from $u$ to $v$. Robbins~\cite{Robbins1939}, showed in 1939 that every bridgeless graph has a strongly connected orientation. The \textit{oriented diameter} of a graph $G$, denoted by $\ordiam(G)$, is defined as
$$\ordiam(G)=\min\{\diam(D):\textrm{$D$ is a strongly connected orientation of $G$}\}.$$ An orientation $D$ of $G$ is called \textit{optimal} if $\ordiam(G)=\diam(D)$. Note that $\ordiam(G)\ge \diam(G)$.

Chv\'atal and Thomassen~\cite{Chvatal-Thomassen1978}, showed in 1978 that determining the oriented diameter of a given graph is NP-complete. In the same paper, Chv\'atal and Thomassen showed that every bridgeless graph $G$ with diameter $d$ satisfies $\ordiam(G)\leq 2d^2+2d$, and there exist bridgeless graphs of diameter $d$ for which every strong orientation has diameter of at least $\frac{1}{2}d^2+d$. The upper bound was recently improved by Babu, Benson, Rajendraprasad and Vaka \cite{BBRV2021} to  $1.373d^2+6.971d-1$.

The paper by Chv\'atal and Thomassen \cite{Chvatal-Thomassen1978} has led to further investigations of such bounds on the oriented diameter with respect to other graph parameters, including the diameter \cite{FMR2004,Huang-Ye2007,KLW2010}, the radius \cite{CGT1985}, the domination number \cite{FMPR2004, Laetsch-Kurz2012}, the maximum degree \cite{DGS2018}, the minimum degree \cite{Bau-Dankelmann2015,CDS2019,Surmacs2017}, the number of edges of the graph \cite{CCDS2021}, and other graph classes \cite{Chen-Chang2021,Gutin1994,GKTY2002,Gutin-Yeo2002,Huang-Ye2007, Koh-Ng2005,KRS2022, Lakshmi2011,Lakshmi-Paulraja2007, Lakshmi-Paulraja2009, Plesnik1985, Soltes1986, WCDGSV2021}. See the survey by Koh and Tay \cite{Koh-Tay2022} for more information on some of these results.

In this paper, we study the oriented diameter of planar graphs, in particular near triangulations. A \textit{near triangulation} is a plane graph such that every face except possibly the outer face is bounded by a triangle. 
 An {\it outerplanar} graph is a planar graph such that every vertex lies on the boundary of the outer face. Let $K_4^{-}$ denote the graph obtained from $K_4$ by deleting an arbitrary edge. Let $W_5$ be the wheel graph on six vertices obtained by connecting every vertex of a $C_5$ to a new vertex. Let $G_6^1, G_6^2, G_6^3, G_8^1$ be the graphs in Figure \ref{fig:exceptions}.
In~\cite{WCDGSV2021}, Wang, Chen, Dankelmann, Guo, Surmacs and Volkmann showed the following theorem on the oriented diameter of (edge-)maximal outerplanar graphs.

\begin{theorem}\cite{WCDGSV2021}\label{thm:outerplanr_diam}
    Let $G$ be an $n$-vertex maximal outerplanar graph with $n\geq 3$ that is not one of the graphs in $\{K_4^-,G_6^1, G_6^2, G_8^1\}$. Then $\ordiam(G)\leq \ceil{\frac{n}{2}}$ and this upper bound is tight.
\end{theorem}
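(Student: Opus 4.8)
The plan is to treat $G$ as a triangulated polygon: its outer boundary is a Hamiltonian cycle $C=v_1v_2\cdots v_nv_1$, and its weak dual (the adjacency graph of the bounded triangular faces) is a tree $T$ whose leaves correspond to \emph{ears}, i.e.\ degree-$2$ vertices $v_i$ whose neighbors $v_{i-1},v_{i+1}$ are joined by a chord. I would prove the upper bound by induction on $n$, peeling ears off the graph, and establish tightness by an explicit family together with a local obstruction argument. The exceptional graphs $K_4^-,G_6^1,G_6^2,G_8^1$ should appear precisely as the small cases where the naive extension overshoots, so they are handled (and excluded) in the base of the induction.

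For the induction I would not prove the bare statement but a strengthened invariant: that $G$ admits a strong orientation $D$ with $\diam(D)\le\ceil{\frac n2}$ for which, in addition, the two endpoints $a,b$ of the boundary edge along which the ``rest of the graph'' is attached satisfy $d_D(x,a)\le \ceil{\frac n2}-1$ and $d_D(b,x)\le\ceil{\frac n2}-1$ for every $x$. The extra slack of $1$ on these two distinguished distances is exactly what is needed to absorb the cost of re-attaching deleted vertices. Since $\ceil{\frac n2}$ increases by $1$ each time $n$ increases by $2$, the induction removes \emph{two} vertices at a time (two ears, i.e.\ two leaves of $T$, which exist once $n\ge 5$), keeping the parity fixed so that the budget grows by exactly $1$ per step.

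The inductive step removes two ears $v,w$, applies the strengthened hypothesis to the resulting $(n-2)$-vertex triangulation $G'$, and re-inserts $v$ between its neighbors $u_1,u_2$ by orienting $u_1\to v\to u_2$ (and symmetrically for $w$). For a pair with at most one new endpoint the bounds $d(x,v)\le d_{G'}(x,u_1)+1$ and $d(v,x)\le 1+d_{G'}(u_2,x)$ immediately give at most $\ceil{\frac n2}$. \textbf{The main obstacle is the pair of two newly inserted vertices:} the naive estimate $d(v,w)\le 2+d_{G'}(u_2,w_1)$, where $w_1$ is the neighbor of $w$ playing the role of $u_1$, is one too large on the extremal pairs of $G'$. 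Resolving this is the technical heart of the argument: one must choose the two ears and the directions in which they are inserted (each ear may be oriented $u_1\to v\to u_2$ or $u_2\to v\to u_1$) so that the relevant internal distance in $G'$ is strictly below $\ceil{\frac n2}-1$, using the strengthened invariant to guarantee such a choice always exists, and threading the invariant (the distinguished edge $ab$ and its slack) correctly to the next level. It is the interaction of this overshoot with the small graphs that forces the finite list of exceptions, so I expect a careful but finite case analysis for small $n$ to complete the base.

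Finally, for tightness I would exhibit an explicit family of maximal outerplanar graphs with a long induced path of degree-$2$ vertices on the boundary (fans and ``snake'' triangulations are the natural candidates) and show that every strong orientation has diameter at least $\ceil{\frac n2}$. The lower bound comes from a local obstruction: at each degree-$2$ boundary vertex the orientation is forced to be ``one arc in, one arc out,'' so a source--sink pair chosen at the two extremes of such a path cannot be connected in both directions without traversing roughly half of the cycle, giving $\diam(D)\ge\ceil{\frac n2}$. Combined with the upper bound, this shows the estimate cannot be improved.
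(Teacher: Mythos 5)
This theorem is quoted from \cite{WCDGSV2021}; the present paper does not reprove it, but it imports the machinery of the actual proof as Lemmas \ref{lem:two_deg2}, \ref{lem:four_deg2} and \ref{lem:three_deg2}, and those lemmas show exactly where your plan breaks. Your induction removes \emph{two arbitrary} ears $v,w$ and re-inserts each at cost $+1$. As you yourself note, the pair $(v,w)$ then overshoots: $d_D(v,w)\le 1+d_{D'}(u_2,w_1)+1$ can reach $\ceil*{\frac{n}{2}}+1$. Your proposed fix --- a strengthened invariant giving slack $1$ on distances to and from the endpoints $a,b$ of one distinguished boundary edge, plus the freedom to flip each ear's orientation --- does not resolve this. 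The attachment edges $u_1u_2$ and $w_1w_2$ of the two ears are arbitrary chords of $G'$, in general unrelated to the single distinguished edge $ab$, so the invariant gives no control over $d_{D'}(u_2,w_1)$ or $d_{D'}(w_2,u_1)$; and flipping an ear only swaps which of the two directions is tight, it cannot shrink $d(v,w)$ and $d(w,v)$ simultaneously when the internal distances between the two attachment chords are extremal in both directions. This is a genuine gap rather than a deferred finite check: the statement ``any two ears can be reinserted at total cost $+1$'' is false in general, which is precisely why the proof in \cite{WCDGSV2021} restricts the $+1$ reduction to two degree-$2$ vertices \emph{with a common neighbor} (Lemma \ref{lem:two_deg2}), and otherwise pays $+2$ for removing \emph{four} degree-$2$ vertices (Lemma \ref{lem:four_deg2}) or $+3$ for removing three degree-$2$ vertices together with adjacent degree-$3$ vertices (Lemma \ref{lem:three_deg2}). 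The induction is organized as a case analysis on which of these configurations exists in the graph, not around a distinguished-edge invariant.

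Two further problems: first, your induction can land on an exceptional graph at an intermediate step (e.g.\ $n=10$ with $G'=G_8^1$, whose oriented diameter is $\ceil*{\frac{8}{2}}+1$), where the inductive bound fails by one; this must be handled by explicit orientations of the exceptions with per-vertex guarantees (the role played by Lemma \ref{lem:outerplanar_vtx} in the present paper), not merely by excluding the exceptions at the base case. Second, on tightness, fans are not extremal --- a fan admits an orientation of diameter at most $4$ (alternate the apex edges and orient the path consistently) --- and in a maximal outerplanar graph on $n\ge 4$ vertices no two degree-$2$ vertices are adjacent, so there is no ``induced path of degree-$2$ vertices'' to anchor your local obstruction; the snake-like strips are the right family, and for odd $n$ their underlying diameter is only $\frac{n-1}{2}$, so the lower bound genuinely requires an argument about orientations, not just distance in $G$.
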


   \begin{figure}[htb]
    \hbox to \hsize{
	\hfil
	\resizebox{4cm}{!}{\begin{tikzpicture}[scale=1, Wvertex/.style={circle, draw=black, fill=white, scale=3}, bvertex/.style={circle, draw=black, fill=black, scale=0.3},rvertex/.style={circle, draw=red, fill=red, scale=0.2}]

\node [bvertex, label={[font=\small] left:$v_1$}] (v1) at (-1,0) {};
\node [bvertex, label={[font=\small] above:$v_2$}] (v2) at (0,1) {};
\node [bvertex, label={[font=\small] above:$v_3$}] (v3) at (1,1) {};
\node [bvertex, label={[font=\small] right:$v_4$}] (v4) at (2,0) {};
\node [bvertex, label={[font=\small] below:$v_5$}] (v5) at (1,-1) {};
\node [bvertex, label={[font=\small] below:$v_6$}] (v6) at (0,-1) {};

\draw (v1) -- (v2);
\draw (v2) -- (v3);
\draw (v3) -- (v4);
\draw (v4) -- (v5);
\draw (v5) -- (v6);
\draw (v6) -- (v1);
\draw (v2) -- (v6);
\draw (v3) -- (v6);
\draw (v3) -- (v5);

\end{tikzpicture}	}%
	\hfil
	\resizebox{3.5cm}{!}{\begin{tikzpicture}[scale=1, Wvertex/.style={circle, draw=black, fill=white, scale=3}, bvertex/.style={circle, draw=black, fill=black, scale=0.3},rvertex/.style={circle, draw=red, fill=red, scale=0.2}]

\node [bvertex, label={[font=\small] left:$v_1$}] (v1) at (180:1) {};
\node [bvertex, label={[font=\small] above:$v_2$}] (v2) at (120:1) {};
\node [bvertex, label={[font=\small] above:$v_3$}] (v3) at (60:1) {};
\node [bvertex, label={[font=\small] right:$v_4$}] (v4) at (0:1) {};
\node [bvertex, label={[font=\small] below:$v_5$}] (v5) at (300:1) {};
\node [bvertex, label={[font=\small] below:$v_6$}] (v6) at (240:1) {};

\draw (v1) -- (v2);
\draw (v2) -- (v3);
\draw (v3) -- (v4);
\draw (v4) -- (v5);
\draw (v5) -- (v6);
\draw (v6) -- (v1);
\draw (v2) -- (v6);
\draw (v3) -- (v6);
\draw (v4) -- (v6);

\end{tikzpicture}	}%
	\hfil
    \resizebox{4.7cm}{!}{\begin{tikzpicture}[scale=1, Wvertex/.style={circle, draw=black, fill=white, scale=3}, bvertex/.style={circle, draw=black, fill=black, scale=0.3},rvertex/.style={circle, draw=red, fill=red, scale=0.2}]

\node [bvertex, label={[font=\small] left:$v_1$}] (v1) at (150:1) {};
\node [bvertex, label={[font=\small] above:$v_2$}] (v2) at (0,0) {};
\node [bvertex, label={[font=\small] above:$v_3$}] (v3) at (30:1) {};
\node [bvertex, label={[font=\small] right:$v_4$}] (v4) at (2,-0.25) {};
\node [bvertex, label={[font=\small] below:$v_5$}] (v5) at (0.866,-1) {};
\node [bvertex, label={[font=\small] below:$v_6$}] (v6) at (-90:1) {};

\draw (v1) -- (v2);
\draw (v2) -- (v3);
\draw (v3) -- (v4);
\draw (v4) -- (v5);
\draw (v5) -- (v6);
\draw (v6) -- (v1);
\draw (v2) -- (v6);
\draw (v3) -- (v6);
\draw (v3) -- (v5);
\draw (v1) -- (v3);

\end{tikzpicture}	}%
    \hfil
    \resizebox{3.5cm}{!}{\begin{tikzpicture}[scale=1, Wvertex/.style={circle, draw=black, fill=white, scale=3}, bvertex/.style={circle, draw=black, fill=black, scale=0.3},rvertex/.style={circle, draw=red, fill=red, scale=0.2}]

\node [bvertex, label={[font=\small] above:$v_1$}] (v1) at (-1,1) {};
\node [bvertex, label={[font=\small] above:$v_2$}] (v2) at (0,1) {};
\node [bvertex, label={[font=\small] above:$v_3$}] (v3) at (1,1) {};
\node [bvertex, label={[font=\small] above:$v_4$}] (v4) at (2,1) {};
\node [bvertex, label={[font=\small] below:$v_5$}] (v5) at (2,-1) {};
\node [bvertex, label={[font=\small] below:$v_6$}] (v6) at (1,-1) {};
\node [bvertex, label={[font=\small] below:$v_7$}] (v7) at (0,-1) {};
\node [bvertex, label={[font=\small] below:$v_8$}] (v8) at (-1,-1) {};

\draw (v1) -- (v2);
\draw (v2) -- (v3);
\draw (v3) -- (v4);
\draw (v4) -- (v5);
\draw (v5) -- (v6);
\draw (v6) -- (v7);
\draw (v7) -- (v8);
\draw (v8) -- (v1);
\draw (v2) -- (v7);
\draw (v3) -- (v6);
\draw (v1) -- (v7);
\draw (v3) -- (v7);
\draw (v3) -- (v5);

\end{tikzpicture}	}%
    \hfil
    }
    \caption{Some small exceptions to Theorem \ref{thm:near_triangulation_diameter}: $G_6^1, G_6^2, G_6^3, G_8^1$ (from left to right).}
    \label{fig:exceptions}
    \end{figure}
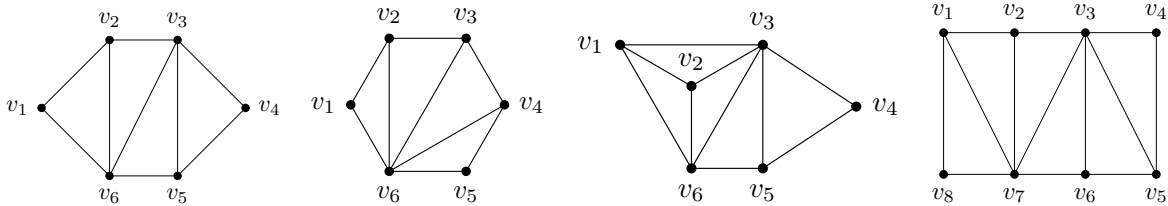

Recently, Mondal, Parthiban and Rajasingh \cite{MPR2023} studied the oriented diameter of planar triangulations, which are the maximal planar graphs. They showed the following theorem.

\begin{theorem}\cite{MPR2023}\label{thm:planar_diam}
Let $G$ be an $n$-vertex planar triangulation. Then $\ordiam(G) \leq \frac{n}{2}+O(\sqrt{n})$. Moreover, for $n$ that is a multiple of $3$, there exist $n$-vertex planar triangulations with oriented diameter at least $n/3$. 
\end{theorem}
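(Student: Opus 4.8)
The statement splits into an upper and a lower bound, which I would handle separately, and the lower bound is the easier of the two. Since $\ordiam(G)\ge \diam(G)$ for every graph, for the lower bound it suffices to exhibit, for each $n$ divisible by $3$, a planar triangulation whose undirected diameter is already at least $n/3$. I would use a ``rolling'' stacked triangulation (an Apollonian network): start from a triangle $v_1v_2v_3$ and repeatedly insert a new vertex $v_k$ into the bounded face $\{v_{k-1},v_{k-2},v_{k-3}\}$, joining it to those three vertices. This keeps the outer face equal to the triangle $v_1v_2v_3$, so the result is a genuine planar triangulation, and by construction every edge $v_iv_j$ satisfies $|i-j|\le 3$. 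Consequently any path advances its index by at most $3$ per step, so $d(v_1,v_n)\ge \lceil (n-1)/3\rceil$, which equals $n/3$ when $3\mid n$; hence $\diam\ge n/3$ and therefore $\ordiam\ge n/3$.

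For the upper bound I would exploit planar separators, which is the natural source of the additive $O(\sqrt n)$ term. Using a cycle separator theorem for triangulations, I would find a simple cycle $C$ with $|C|=O(\sqrt n)$ whose interior $A$ and exterior $B$ each contain at most $2n/3$ vertices; say $|A|\le |B|$, so $|A|\le n/2$. I would orient $C$ as a single directed cycle, so that any two of its vertices lie within directed distance $|C|=O(\sqrt n)$ of one another, and then orient the interior near-triangulation on $A\cup C$ and the exterior near-triangulation on $B\cup C$ so that each piece is internally strongly connected and every vertex can both reach $C$ and be reached from $C$ quickly. For $u,v$ on the same side one routes within that side; for $u,v$ on opposite sides one routes $u\to C\to v$, paying the two within-side costs plus the $O(\sqrt n)$ tour of $C$. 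The desired inequality $\ordiam(G)\le n/2+O(\sqrt n)$ then reduces to showing that the within-side routing costs together contribute at most $n/2+O(\sqrt n)$.

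This last accounting is exactly where I expect the main difficulty to lie. A naive ``go out to $C$ and come back in'' bound costs up to the full depth of each side and can reach $n$ rather than $n/2$, and the $2n/3$ imbalance of the separator only makes a piece-by-piece argument worse. To force the constant down to $1/2$ I would orient each side \emph{cyclically} rather than radially, mimicking the maximal outerplanar construction behind Theorem~\ref{thm:outerplanr_diam}: orient a boundary traversal consistently and orient the triangulating chords so that they furnish \emph{backward} shortcuts, whence every ``antipodal'' target is reached either the short way forward or via a single chord, never using more than about half the vertices. The delicate points are verifying that the chords of a triangulation always supply a backward jump of the required length, making these shortcut structures cooperate across the separating cycle, and confirming that the only place the genuine $O(\sqrt n)$ loss appears is in the tour of $C$ and the imbalance it introduces. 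I anticipate that essentially all of the technical work of the proof concentrates here.
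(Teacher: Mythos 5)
This statement is Theorem~\ref{thm:planar_diam}, which the paper quotes from \cite{MPR2023} without proof, so there is no in-paper argument to compare against; your proposal must therefore stand on its own. The lower-bound half does: your Apollonian-type construction (insert $v_k$ into the face $v_{k-1}v_{k-2}v_{k-3}$) is a genuine planar triangulation in which every edge $v_iv_j$ satisfies $|i-j|\le 3$, so $d(v_1,v_n)\ge\lceil (n-1)/3\rceil=n/3$ when $3\mid n$, and $\ordiam(G)\ge\diam(G)$ finishes that part correctly.

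The upper bound, however, contains a genuine gap. You set up the separator framework (a simple cycle separator $C$ of size $O(\sqrt n)$ with each side of size at most $2n/3$, orient $C$ as a directed cycle, route cross-side pairs through $C$), but the step that actually produces the constant $\frac{1}{2}$ --- orienting each side so that every vertex can reach $C$ and be reached from $C$ within roughly half the size of its own side, while each side is also internally strongly connected with suitably small diameter --- is never carried out; you acknowledge this yourself (``this last accounting is exactly where I expect the main difficulty to lie''). The proposed remedy, orienting each side ``cyclically'' with triangulating chords serving as backward shortcuts in the style of the maximal outerplanar argument behind Theorem~\ref{thm:outerplanr_diam}, is not a proof: the region inside (or outside) a cycle separator is a near triangulation with interior vertices, not an outerplanar graph, so the outerplanar shortcut structure does not transfer directly, and forcing the two sides' orientations to agree on the single directed cycle $C$ while retaining both quantitative bounds is exactly the nontrivial content. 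As written, the proposal establishes the lower bound but only conjectures a route to the upper bound; completing it would require either a strengthened inductive statement for near triangulations with prescribed behavior on the outer cycle (in the spirit of Lemmas~\ref{lem:outerplanar_vtx} and \ref{lem:planartriangulation_vtx}, but with control of distances to and from the entire outer cycle) or recourse to the original argument of \cite{MPR2023}.
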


They \cite{MPR2023} asked whether the upper bound could be improved. In this paper, we determine a tight upper bound (except for seven small exceptions) on the oriented diameter of a $2$-connected near triangulation. 

\begin{theorem}\label{thm:near_triangulation_diameter}
    Let $G$ be an $n$-vertex $2$-connected near triangulation that is not one of the graphs in $\{K_4^-, K_4, W_5, G_6^1, G_6^2, G_6^3, G_8^1\}$. Then 
    $\ordiam(G)\leq \ceil*{\frac{n}{2}}.$
\end{theorem}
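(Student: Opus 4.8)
The plan is to argue by induction on $n$. For the base cases one checks all $2$-connected near triangulations up to a fixed size (say $n \le 8$) directly; this is where the seven exceptional graphs are isolated, and one verifies that every other small graph already satisfies the bound. For the inductive step I would first dispose of the \emph{outerplanar} case: if $G$ has no vertex in the interior of its outer face, then $G$ is a maximal outerplanar graph, and since $\{K_4^-, G_6^1, G_6^2, G_8^1\}$ is contained in our list of exceptions, Theorem~\ref{thm:outerplanr_diam} gives $\ordiam(G) \le \ceil*{\frac{n}{2}}$ immediately. Thus the real content is the case where $G$ has at least one interior vertex.

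To handle interior vertices I would \emph{peel the graph from the outside in}. Let $C$ be the outer cycle. The key elementary fact is that deleting a suitable boundary vertex $u$ of $G$ (one whose removal neither disconnects $G$ nor creates a cut vertex) yields a smaller $2$-connected near triangulation $G' = G - u$, in which the former interior neighbours of $u$ now lie on the outer cycle; iterating this process eventually brings every interior vertex to the boundary. The subtlety is the arithmetic of the bound: deleting a single vertex only drops $\ceil*{\frac{n}{2}}$ by one when $n$ is odd, so a one-vertex reduction is too wasteful in general. I would therefore remove vertices \emph{two at a time} (e.g.\ two consecutive boundary vertices, or an ear together with a neighbour), so that $n$ decreases by $2$ and the target bound decreases by exactly $1$; the job of the inductive step is then to re-insert the two deleted vertices and orient their incident edges so that the diameter grows by at most $1$.

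Because a raw induction hypothesis is not strong enough to re-insert vertices cheaply, I expect to prove a \emph{strengthened} statement: an orientation of diameter at most $\ceil*{\frac{n}{2}}$ that additionally has a distinguished boundary edge or vertex which can reach, and be reached from, all of $V(G)$ along short directed paths. Recording such an ``access point'' is exactly what lets one splice the orientation of the two reinserted vertices into the orientation of $G'$ without inflating the diameter, by routing every new in/out path through the distinguished vertex. The same strengthening is what makes the other natural reduction work, namely splitting $G$ along a chord of $C$ (a $2$-cut $\{x,y\}$ with $xy \in E(G)$) into two near triangulations sharing the edge $xy$: one orients the two sides so that crossings of $xy$ are cheap, and the overlap of the two shared vertices is absorbed by the access-point bookkeeping rather than being double-counted.

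The main obstacle, and where essentially all the work lies, is making this amortization tight. One must show that in \emph{every} $2$-connected near triangulation (outside the exceptions) there is a reducible configuration --- a pair of removable boundary vertices, a splitting chord, or a small separating triangle bounding a handful of interior vertices --- for which the re-insertion or re-combination costs at most the one unit of diameter that the two-vertex drop buys. This forces a careful case analysis on the local boundary structure (degrees of boundary vertices, presence of chords, behaviour of the exposed interior neighbours), together with the bookkeeping needed to guarantee that each reduced graph is again $2$-connected, is again a near triangulation, and is not itself one of the seven exceptions; whenever a reduction would land on an exceptional graph one treats that bounded configuration by hand. Once the reducible configuration and the access-point-preserving gluing are established in each case, the induction closes and yields $\ordiam(G) \le \ceil*{\frac{n}{2}}$.
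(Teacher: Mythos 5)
Your outline correctly guesses several ingredients of the paper's proof --- induction with computer-checked base cases $n\le 8$, the reduction of the outerplanar case to Theorem~\ref{thm:outerplanr_diam}, a strengthened ``access point'' orientation lemma (this is exactly what Lemmas~\ref{lem:outerplanar_vtx} and~\ref{lem:planartriangulation_vtx} provide), and separating triangles as a relevant configuration --- but it has a genuine gap at precisely the point you flag as ``where essentially all the work lies'': you never prove that a reducible configuration exists, and the specific reduction you propose does not work as stated. Re-inserting two adjacent boundary vertices $u,v$ at a cost of $+1$ requires, at minimum, that each of $u,v$ has both an in-neighbour and an out-neighbour in $G-\{u,v\}$ arranged around their common facial-triangle vertex; this already fails if one of them has degree $2$, and the pair-to-pair distances $d(u,v)$, $d(v,u)$ force the common-neighbour structure (this is why Lemma~\ref{lem:two_deg2} carries the hypothesis $N(v_1)\cap N(v_2)\ne\emptyset$ and the extra $\max\{\cdot\,,4\}$). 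Worse, $G-\{u,v\}$ need not be $2$-connected: any chord of the outer cycle incident with $u$ or $v$ makes the other endpoint of that chord a cut vertex of the reduced graph, so the reduced graph is not a near triangulation in the sense required by the induction; and you must also avoid landing on one of the seven exceptions. None of this case analysis is carried out, and it is not routine.

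The paper closes this gap with two ideas absent from your proposal. First, a preprocessing step: whenever an interior vertex $v$ forms a facial triangle with an edge $e$ of the outer cycle, delete $e$ (by Lemma~\ref{lem:spanning_subgraph} this can only increase the oriented diameter) and repeat; this replaces your vertex-peeling by an edge-peeling that pushes the outer cycle inward without losing any vertices. Second, the structural Lemma~\ref{lem:deg2vtx}: after preprocessing, a non-outerplanar near triangulation has at least three degree-$2$ boundary vertices, and if it has exactly three, then its interior is enclosed by a separating triangle $G[S]$, decomposing $G$ into a planar triangulation $\overline{T}$ together with three maximal outerplanar graphs glued along the edges of $T$. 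Degree-$2$ vertices are exactly the vertices whose deletion and re-insertion can be amortized at the right cost (Lemmas~\ref{lem:two_deg2}--\ref{lem:three_deg2}, imported from \cite{WCDGSV2021}), and the triangle decomposition is where the access-point lemmas are actually deployed. Without a substitute for these two steps --- some proof that your ``good pair'', splitting chord, or bounded separating configuration always exists outside the exceptional graphs --- your induction does not close.
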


Note that both maximal outerplanar graphs and planar triangulations are $2$-connected near triangulations. Therefore, Theorem \ref{thm:near_triangulation_diameter} extends Theorem \ref{thm:outerplanr_diam} and improves the upper bound in Theorem \ref{thm:planar_diam}. Moreover, by Theorem \ref{thm:outerplanr_diam}, there exist $n$-vertex maximal outerplanar graphs (which are near triangulations) that have oriented diameter $\ceil{n/2}$ for every integer $n$ with $n\ge 5$. Hence the upper bound in Theorem \ref{thm:near_triangulation_diameter} is tight.\\

{\bf Organization and terminology.} In Section \ref{sec:preliminaries}, we show some lemmas about the structure of near triangulations with certain outer cycle. In Section \ref{sec:main-theorem}, we give a proof of Theorem \ref{thm:near_triangulation_diameter}. For convenience, for the rest of the paper, we assume that a near triangulation is $2$-connected, i.e., its outer face is bounded by a cycle.
We conclude this section with some terminology and notation. For any positive integer $k$, let $[k]:=\{ 1, 2, \ldots, k\}$. Let $G$ be a plane graph. The {\it outer walk} of $G$ consists of vertices and edges of $G$ incident with the outer face of $G$. If the outer walk is a cycle in $G$, we call it  {\it outer cycle} instead. 
For a cycle $C$ in  $G$, we use $\overline{C}$ to denote the subgraph of $G$ consisting of all vertices and edges of $G$ contained in the closed disc in the plane bounded by $C$. The {\it interior} of $C$ is then defined as the subgraph $\overline{C}-C$. For any distinct vertices $u,v\in V(C)$, we use $uCv$ to denote the subpath of $C$ from $u$ to $v$ in clockwise order. 

\section{Preliminaries}\label{sec:preliminaries}

The following lemma is clear, we state it below without proof.
\begin{lemma}\label{lem:spanning_subgraph}
Let $G$ be a graph and $F$ be a spanning subgraph of $G$. Then $\ordiam(G)\le \ordiam(F)$.
\end{lemma}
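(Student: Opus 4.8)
The plan is to start from an optimal orientation of the smaller graph $F$ and lift it to $G$ by orienting the extra edges arbitrarily, then argue that adding arcs cannot increase distances. First I would dispose of the degenerate case: if $F$ has no strong orientation (equivalently, $F$ contains a bridge or is disconnected), then $\ordiam(F)=\infty$ under the usual convention and the inequality holds vacuously. So I may assume $F$ is bridgeless and fix, by Robbins' theorem, a strongly connected orientation $D_F$ of $F$ that is optimal, i.e. $\diam(D_F)=\ordiam(F)$.

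Next I would construct an orientation $D_G$ of $G$ as follows: retain the direction of every arc of $D_F$, and orient each edge of $E(G)\setminus E(F)$ in an arbitrary direction. Since $F$ is spanning we have $V(F)=V(G)$, so $D_G$ is a genuine orientation of all of $G$, and $D_F$ is a spanning subdigraph of $D_G$ in the sense that $A(D_F)\subseteq A(D_G)$.

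The key observation is the monotonicity of directed distances under adding arcs: for any ordered pair $u,v\in V(G)$, a shortest directed path from $u$ to $v$ in $D_F$ is still a directed path from $u$ to $v$ in $D_G$, whence $d_{D_G}(u,v)\le d_{D_F}(u,v)$. In particular $D_G$ remains strongly connected, and $\diam(D_G)=\max_{u,v} d_{D_G}(u,v)\le \max_{u,v} d_{D_F}(u,v)=\diam(D_F)$. Therefore $\ordiam(G)\le \diam(D_G)\le \diam(D_F)=\ordiam(F)$, as claimed. Since the argument uses nothing beyond the definition of oriented diameter, I expect no genuine obstacle; the only point deserving a word of care is confirming that the extra edges may be oriented freely without destroying the strong connectivity already guaranteed by $D_F$, which the monotonicity observation settles at once.
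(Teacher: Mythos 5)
Your proposal is correct: the paper states this lemma without proof (calling it ``clear''), and your argument---extending an optimal orientation of $F$ to $G$ by orienting the edges of $E(G)\setminus E(F)$ arbitrarily, then invoking monotonicity of directed distances under arc addition---is precisely the standard argument the authors have in mind, including the proper handling of the degenerate case where $F$ admits no strong orientation.
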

In \cite{WCDGSV2021}, Wang et.al.~applied induction on the number of vertices to show Theorem~\ref{thm:outerplanr_diam} and they observed three structures that help reduce a maximal outerplanar graph to a graph with fewer vertices. Observe that those structures can also help reduce a near triangulation. Note that every degree-$2$ vertex $v$ in a near triangulation is contained in its outer cycle $C$ and $N(v)\subseteq V(C)$.
\begin{lemma}\cite{WCDGSV2021}\label{lem:two_deg2}
Let $G$ be a near triangulation and $v_1, v_2$ be two distinct degree-$2$ vertices in $G$. Let $H=G-\{v_1, v_2\}$. If $N(v_1)\cap N(v_2)\ne \emptyset$, then $\ordiam(G)\le \max \{ \ordiam (H) +1, 4\}$.
\end{lemma}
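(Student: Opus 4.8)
The plan is to take an optimal strong orientation of $H$ and extend it to $G$ by orienting only the four edges incident to $v_1$ and $v_2$, choosing their directions so that the two distances $d(v_1,v_2)$ and $d(v_2,v_1)$ stay small. First I would pin down the local structure. Fix $w\in N(v_1)\cap N(v_2)$. Since $v_1,v_2$ have degree $2$, each lies on the outer cycle $C$ with both neighbours on $C$, and the unique inner face incident to such a vertex is a triangle, so the two neighbours of each $v_i$ are adjacent. Writing $N(v_1)=\{p_1,w\}$ and $N(v_2)=\{q_2,w\}$, this yields edges $p_1w,\,wq_2\in E(G)$, and since both avoid $v_1,v_2$ they persist in $H$. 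Two degenerate configurations are set aside at this point: if $v_1v_2\in E(G)$ then $N(v_1)=\{v_2,w\}$ and $N(v_2)=\{v_1,w\}$, the outer cycle is the triangle $v_1v_2w$, so $G=K_3$ and $\ordiam(K_3)=2\le 4$; and if $v_1,v_2$ share both neighbours then the outer cycle is a $4$-cycle and $G=K_4^-$, for which $H$ is a single edge and $\ordiam(H)=\infty$. Hence I may assume $p_1,w,q_2$ are pairwise distinct and distinct from $v_1,v_2$.

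If $H$ has a bridge then $\ordiam(H)=\infty$ and the claim is vacuous, so I assume $H$ is bridgeless and fix an optimal strong orientation $D_H$ with $\diam(D_H)=\ordiam(H)=:k$. I extend $D_H$ to an orientation $D$ of $G$ by directing the four edges $p_1v_1,\,v_1w,\,wv_2,\,v_2q_2$, giving each $v_i$ in-degree and out-degree exactly $1$ so that $D$ stays strongly connected. Any such choice already controls every distance except the ordered pairs $(v_1,v_2)$ and $(v_2,v_1)$: since $D_H\subseteq D$, distances within $V(H)$ are at most $k$, while $d(v_i,u)\le 1+k$ and $d(u,v_i)\le k+1$ for $u\in V(H)$ by leaving or entering $v_i$ through its out- or in-neighbour and routing through $D_H$. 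Letting $\mathrm{out}_i$ and $\mathrm{in}_i$ denote the out- and in-neighbour chosen for $v_i$, one has $d(v_1,v_2)\le d_{D_H}(\mathrm{out}_1,\mathrm{in}_2)+2$ and $d(v_2,v_1)\le d_{D_H}(\mathrm{out}_2,\mathrm{in}_1)+2$.

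The heart of the argument is to choose the four directions so that both of these are at most $\max(k+1,4)$, which I would do by reading off how $D_H$ orients the triangle edges $p_1w$ and $wq_2$ and aligning the new arcs with them through $w$. If $p_1\to w$ and $w\to q_2$ in $D_H$, I orient $w\to v_1\to p_1$ and $q_2\to v_2\to w$, so that $d(v_2,v_1)=2$ and $d(v_1,v_2)\le d_{D_H}(p_1,q_2)+2\le 4$ using $p_1\to w\to q_2$; the case $w\to p_1,\ q_2\to w$ is symmetric. If instead $p_1\to w$ and $q_2\to w$, I orient $w\to v_1\to p_1$ and $w\to v_2\to q_2$, so that $d(v_1,v_2)\le d_{D_H}(p_1,w)+2=3$ and $d(v_2,v_1)\le d_{D_H}(q_2,w)+2=3$; the remaining case $w\to p_1,\ w\to q_2$ is handled by reversing both new paths at $w$. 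In all four cases both cross-distances are at most $4$, so $\diam(D)\le\max(k+1,4)=\max(\ordiam(H)+1,4)$, and since $D$ is a strong orientation of $G$ this bounds $\ordiam(G)$.

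The only genuine obstacle is this final alignment. A naive ``transit'' orientation of $v_1$ and $v_2$ forces one of the two directions between them to detour around $H$, costing $d_{D_H}(\cdot,\cdot)+2$, which can reach $k+2$ and break the bound. The point is that at least one of the two triangle edges at $w$ always points the useful way, so matching the four new arcs to it collapses that detour to a path of length at most $2$ through $w$; this both rescues the estimate and explains the additive constant $4$, attained exactly in the mixed cases where the short connection has length $2$.
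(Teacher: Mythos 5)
Your proof is correct: the degenerate configurations ($G=K_3$ when $v_1v_2\in E(G)$, and $G=K_4^-$ when $N(v_1)=N(v_2)$, where the bound is vacuous since $\ordiam(H)=\infty$) are disposed of properly, and your four-way case analysis on the orientations of $p_1w$ and $wq_2$ does exactly what is needed — each $v_i$ ends up on a directed triangle through $w$, so both distances between $v_1$ and $v_2$ are at most $4$, while all other pairs are controlled by $\ordiam(H)+1$. The paper states this lemma without proof, citing \cite{WCDGSV2021}, and your argument is essentially the same as the one given in that source: extend an optimal strong orientation of $H$ by orienting the two edges at each $v_i$ consistently with the existing arc between $w$ and the other neighbour of $v_i$.
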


\begin{lemma}\cite{WCDGSV2021}\label{lem:four_deg2}
Let $G$ be a near triangulation and $v_1, v_2, v_3, v_4$ be four distinct degree-$2$ vertices in $G$. Let $H=G-\{v_1, v_2, v_3, v_4\}$. Then $\ordiam(G)\le \ordiam (H) +2$.
\end{lemma}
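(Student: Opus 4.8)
The plan is to prove Lemma~\ref{lem:four_deg2} by reducing the orientation problem on $G$ to one on the smaller near triangulation $H = G - \{v_1,v_2,v_3,v_4\}$, and then attaching the four removed degree-$2$ vertices back in a way that costs only a bounded additive increase in diameter. Since each $v_i$ has degree $2$ in the near triangulation $G$, its two neighbors lie on the outer cycle $C$ of $G$ and are themselves adjacent (they bound a triangular inner face with $v_i$). So each $v_i$ sits inside a triangle $v_i a_i b_i$ where $a_i b_i$ is an edge of $H$. The first step is to fix an optimal strong orientation $D_H$ of $H$ with $\mathrm{diam}(D_H) = \ordiam(H)$, keep all those arc directions, and decide how to orient the four pairs of pendant-triangle edges $\{v_i a_i, v_i b_i\}$.

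The key idea is that a degree-$2$ vertex $v_i$ can be cheaply integrated by orienting its two incident edges consistently with the already-chosen orientation of the base edge $a_i b_i$ in $D_H$. Concretely, if $D_H$ directs the base edge as $a_i \to b_i$, I would orient $a_i \to v_i \to b_i$, so that $v_i$ lies on a directed path of length $2$ that shortcuts (or parallels) the base arc. With this choice, for any vertex $u \in V(H)$ we have $d(u, v_i) \le d_{D_H}(u, a_i) + 1$ and $d(v_i, u) \le 1 + d_{D_H}(b_i, u)$, and symmetrically distances out of and into $v_i$ differ from the corresponding distances at $a_i$ or $b_i$ by at most $1$. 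The pairwise distances among the four new vertices $v_1,\dots,v_4$ then cost at most the base-vertex distance plus $2$ (one for entering, one for leaving), which is where the ``$+2$'' in the statement comes from. Since $G$ has the extra four vertices but they are each one step from the retained graph, the overall diameter increases by at most $2$: a shortest route between any two vertices of $G$ either stays essentially inside $H$ (cost $\le \ordiam(H)$) or pays at most one extra arc at the start and one at the end to reach/leave the attached $v_i$'s.

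I would organize the verification by cases according to how many of the two endpoints of a maximum-distance pair lie among $\{v_1,v_2,v_3,v_4\}$. If neither endpoint is new, the bound $\ordiam(H)$ holds directly. If exactly one is a $v_i$, one extra arc suffices, giving $\ordiam(H)+1$. The tight case is when both endpoints are new vertices, say $v_i$ and $v_j$; here I route $v_i \to b_i \rightsquigarrow a_j \to v_j$ through $D_H$, using the two incident arcs and the path of length $d_{D_H}(b_i, a_j) \le \ordiam(H)$ in between, yielding at most $\ordiam(H)+2$. A small point to confirm is that one must also check pairs where a $v_i$ and its own base endpoint interact, but these are handled by the length-$2$ path through $v_i$ and cost at most $2$, comfortably within the bound.

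The main obstacle I anticipate is \emph{not} the distance bookkeeping but ensuring that the resulting orientation of $G$ is genuinely strongly connected and that $H$ itself is a $2$-connected near triangulation to which an optimal strong orientation applies. Deleting four degree-$2$ vertices from a near triangulation leaves the outer cycle intact apart from replacing each path $a_i v_i b_i$ by the chord $a_i b_i$ (already present), so $H$ remains a near triangulation; I would note that $H$ stays $2$-connected because removing a degree-$2$ vertex from a $2$-connected graph whose neighbors are adjacent cannot create a cut vertex. Strong connectivity of the extension is immediate from the chosen $a_i \to v_i \to b_i$ pattern, since each $v_i$ has one in-arc and one out-arc connecting it to the strongly connected $D_H$. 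The only genuine subtlety is that the statement allows \emph{no} exceptional small cases and has no ``$\max\{\cdot,4\}$'' floor (unlike Lemma~\ref{lem:two_deg2}); I would verify that the argument never needs such a floor precisely because four independent attachment points give enough slack that the two-endpoint case already absorbs into $\ordiam(H)+2$ without a separate base case.
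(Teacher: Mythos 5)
Your proof is correct and is essentially the standard argument: fix an optimal strong orientation $D_H$ of $H$, and for each $v_i$ with neighbors $a_i,b_i$ (which are adjacent, since the inner face at $v_i$ is a triangle) orient $a_i\to v_i\to b_i$ consistently with the arc $a_i\to b_i$ of $D_H$; this keeps the orientation strong, costs at most one extra arc at each end of any route, and the worst case of two new endpoints gives exactly the $+2$. Note that the paper itself does not prove this lemma but imports it from \cite{WCDGSV2021}, and your construction coincides with the proof given there, so your proposal matches the intended argument.
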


\begin{lemma}\cite{WCDGSV2021}\label{lem:three_deg2}
Let $G$ be a near triangulation and $v_1, v_2, v_3$ be three distinct degree-$2$ vertices in $G$ such that for each $i\in [3]$, $v_i$ has a neighbor $v_i'$ of degree three in $G$. Let $H=G-\{v_1, v_2, v_3, v_1', v_2', v_3'\}$. Then $\ordiam(G)\le \ordiam (H) +3$.
\end{lemma}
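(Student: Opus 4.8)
The plan is to prove the lemma by the same delete-and-extend scheme behind Lemmas~\ref{lem:two_deg2} and~\ref{lem:four_deg2}: first analyse the forced local structure around each pair $(v_i,v_i')$, then orient $H=G-\{v_1,v_2,v_3,v_1',v_2',v_3'\}$ optimally, and finally extend that orientation across the deleted vertices so that each of them is cheap both to enter and to leave.

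First I would pin down the configuration. Since $v_i$ has degree $2$ it lies on the outer cycle $C$, and its two neighbours, which are consecutive with $v_i$ on $C$, are adjacent (the inner face at $v_i$ is a triangle). As $v_i'$ is one of these two neighbours and has degree $3$, writing $a_i$ for the other neighbour of $v_i$ and $b_i$ for the third neighbour of $v_i'$, the near-triangulation condition forces the two triangular faces $v_iv_i'a_i$ and $v_i'a_ib_i$. Hence $a_ib_i\in E(G)$ and, locally, $C$ reads $\dots,b_i,v_i',v_i,a_i,\dots$. Deleting $v_i$ and $v_i'$ therefore leaves $a_ib_i$ on the boundary, so $H$ is again a near triangulation; if $H$ has a bridge the claimed bound is vacuous, so I may assume $H$ is bridgeless and fix an optimal strong orientation $D_H$ with $\diam(D_H)=\ordiam(H)$.

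Next I would extend $D_H$ to an orientation $D$ of $G$ by orienting, for each $i\in[3]$, the four arcs $a_i\to v_i$, $a_i\to v_i'$, $v_i\to v_i'$, and $v_i'\to b_i$, while keeping every edge of $H$ (in particular $a_ib_i$) oriented exactly as in $D_H$. This choice is independent of the direction of $a_ib_i$ in $D_H$, and since $D_H$ is strongly connected and each of $v_i,v_i'$ is reachable from $a_i$ and can reach $b_i$, the orientation $D$ is strongly connected. For the diameter I would bound $d_D$ by cases, using throughout that $d_D(x,y)\le d_{D_H}(x,y)\le\ordiam(H)$ whenever $x,y\in V(H)$. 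Entering a gadget is cheap, $d_D(x,v_i)\le d_D(x,a_i)+1$ and $d_D(x,v_i')\le d_D(x,a_i)+1$, and leaving it is almost as cheap, $d_D(v_i',y)\le 1+d_D(b_i,y)$ and $d_D(v_i,y)\le 2+d_D(b_i,y)$ for $y\in V(H)$. Composing an exit with an entry yields the binding case: for $i\ne j$, the path $v_i\to v_i'\to b_i$, followed by a shortest directed path from $b_i$ to $a_j$ in $H$ and then $a_j\to v_j$, gives $d_D(v_i,v_j)\le 3+d_D(b_i,a_j)\le\ordiam(H)+3$. Every other pair (two vertices of $H$, a gadget vertex and a vertex of $H$, or two vertices of the same gadget) is bounded by $\ordiam(H)+2$ in the same manner, so $\diam(D)\le\ordiam(H)+3$ and thus $\ordiam(G)\le\diam(D)\le\ordiam(H)+3$.

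The step I expect to demand the most care is making sure the extension is well defined, namely that the anchors $a_i,b_i$ really lie in $V(H)$ and that the three gadgets do not interlock. Since $a_i$ and $b_i$ have degree at least $3$ in $G$, neither can equal any $v_j$; but in principle an anchor could coincide with some $v_j'$ (also of degree $3$), or two pairs could share a common degree-$3$ vertex with $v_i'=v_j'$. Any such coincidence would place a deleted vertex on one of the routing paths and break the estimate $d_D(b_i,a_j)\le\ordiam(H)$. I would handle these degenerate cases separately, arguing that such a coincidence forces the gadgets to overlap in a way that pins down the local structure so tightly that $G$ is reduced to a bounded configuration that can be checked by hand. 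With the coincidence-free case settled as above and the degenerate cases dispatched directly, the inequality $\ordiam(G)\le\ordiam(H)+3$ follows.
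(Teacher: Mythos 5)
This lemma is quoted from \cite{WCDGSV2021} and the paper under review gives no proof of it, so there is no in-paper argument to compare against; your delete-and-extend scheme is the standard one (essentially that of the cited source), and your construction and case analysis are correct: entering a gadget through $a_i$ costs $1$, leaving through $v_i\to v_i'\to b_i$ costs at most $2$, and the cross-gadget pairs give the binding bound $\ordiam(H)+3$. The one step you defer --- anchors colliding with deleted vertices --- resolves even more cleanly than you anticipate: each such coincidence ($a_i=v_j'$, $b_i=v_j'$, $b_i=v_j$, or $v_i'=v_j'$) forces the outer cycle to have length at most $5$, and its vertices are then exhausted by $v_i,v_j$ and vertices of degree at least $3$; since every degree-$2$ vertex of a near triangulation lies on the outer cycle, this contradicts the existence of the third degree-$2$ vertex, so the degenerate cases are vacuous rather than small configurations needing a hand check.
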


The next lemma follows from Theorem \ref{thm:outerplanr_diam}.

\begin{lemma}\label{lem:outerplanar_vtx}
Let $G$ be an $n$-vertex maximal outerplanar graph with $n\geq 3$ and $v\in V(G)$. Then there exists an optimal orientation $D$ of $G$ such that $\diam(D)=\ordiam(G)\le {\frac{n}{2}}+1$ and $\max\{d_D(u,v),d_D(v,u)\}\leq\ceil*{\frac{n}{2}}$ for all $u\in V(G)$.
\end{lemma}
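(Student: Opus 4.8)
The plan is to deduce the lemma from Theorem~\ref{thm:outerplanr_diam} by separating the graphs that theorem covers from its finite list of exceptions. The crucial (but trivial) observation is that for any orientation $D$, any vertex $v$, and any $u$, we have $\max\{d_D(u,v),d_D(v,u)\}\le \diam(D)$, since the diameter is the maximum over all ordered pairs. Consequently, whenever $\ordiam(G)\le \ceil*{\frac{n}{2}}$, any optimal orientation $D$ already satisfies every conclusion at once: indeed $\diam(D)=\ordiam(G)\le \ceil*{\frac{n}{2}}\le \frac{n}{2}+1$, and then the eccentricity bound $\max\{d_D(u,v),d_D(v,u)\}\le \diam(D)\le \ceil*{\frac{n}{2}}$ holds for every $u$ for free. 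By Theorem~\ref{thm:outerplanr_diam}, this disposes of every maximal outerplanar graph $G\notin\{K_4^-,G_6^1,G_6^2,G_8^1\}$, and such an optimal $D$ exists because $\ordiam(G)$ is by definition attained by some orientation.

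It then remains to handle the four exceptional graphs, to which Theorem~\ref{thm:outerplanr_diam} does not apply; for these one finds $\ordiam(G)=\frac{n}{2}+1$, which is precisely why the statement is phrased with the weaker bound $\frac{n}{2}+1$ rather than $\ceil*{\frac{n}{2}}$. Each of $K_4^-$ $(n=4)$, $G_6^1,G_6^2$ $(n=6)$ and $G_8^1$ $(n=8)$ has even order, so $\ceil*{\frac{n}{2}}=\frac{n}{2}$ and $\frac{n}{2}+1=\ceil*{\frac{n}{2}}+1$. For each such $G$ and each distinguished vertex $v$, I would exhibit an explicit strong orientation $D$ with $\diam(D)=\ordiam(G)=\frac{n}{2}+1$ in which the in- and out-eccentricities of $v$ are both at most $\frac{n}{2}$. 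This is a finite verification, and the automorphism groups of these graphs reduce the work to a few orbit representatives per graph. For instance, in $K_4^-$ with $v$ a degree-$2$ vertex, orienting the outer $4$-cycle cyclically so that it passes through $v$ and orienting the chord consistently gives an optimal orientation of diameter $3$ in which both eccentricities of $v$ equal $2=\frac{n}{2}$.

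The only genuine work, and the expected main obstacle, is this case analysis for the four exceptions: one must certify that the stronger per-vertex eccentricity bound $\frac{n}{2}$ — which is \emph{strictly smaller} than the oriented diameter $\frac{n}{2}+1$ of these graphs — can be met simultaneously with optimality of the orientation. The idea is to choose the orientation so as to place the distinguished vertex $v$ ``centrally,'' and the obstacle is purely that this must be confirmed by hand rather than inherited from the diameter bound. I would record a suitable orientation for each orbit representative (in a table or figure) and verify the relevant distances in each case, checking both that the diameter is exactly $\frac{n}{2}+1$ and that $v$ has eccentricity at most $\frac{n}{2}$ in each direction; this is routine but must be carried out carefully.
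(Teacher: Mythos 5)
Your proposal is correct and follows essentially the same route as the paper: the non-exceptional graphs are dispatched by Theorem~\ref{thm:outerplanr_diam} together with the trivial bound $\max\{d_D(u,v),d_D(v,u)\}\le\diam(D)$, and the four exceptions $K_4^-,G_6^1,G_6^2,G_8^1$ are handled by exhibiting explicit optimal orientations of diameter $\frac{n}{2}+1$ in which the distinguished vertex has in- and out-eccentricity at most $\frac{n}{2}$, with the automorphism group reducing the check to one vertex per orbit (this is exactly what the paper's Figure~\ref{fig:exceptions_orientation} records, red vertices marking the orbit representatives). Your $K_4^-$ example is valid provided the chord is oriented the right way (against the cycle through $v$), which is the kind of detail the finite verification would pin down.
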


\begin{proof}
    By Theorem \ref{thm:outerplanr_diam}, it suffices to verify the lemma when $G\in\{K_4^-, G_6^1,G_6^2,G_8^1\}$. For these four exception graphs, we give explicit optimal orientations (see Figure~\ref{fig:exceptions_orientation}). In each orientation in Figure~\ref{fig:exceptions_orientation}, the vertices $v$ for which Lemma \ref{lem:outerplanar_vtx} holds are colored red. Each vertex in $G$ is isomorphic to some red vertex. This proves Lemma~\ref{lem:outerplanar_vtx}.
\end{proof}

  \begin{figure}[htb]
    \hbox to \hsize{
    \resizebox{10cm}{!}{\input{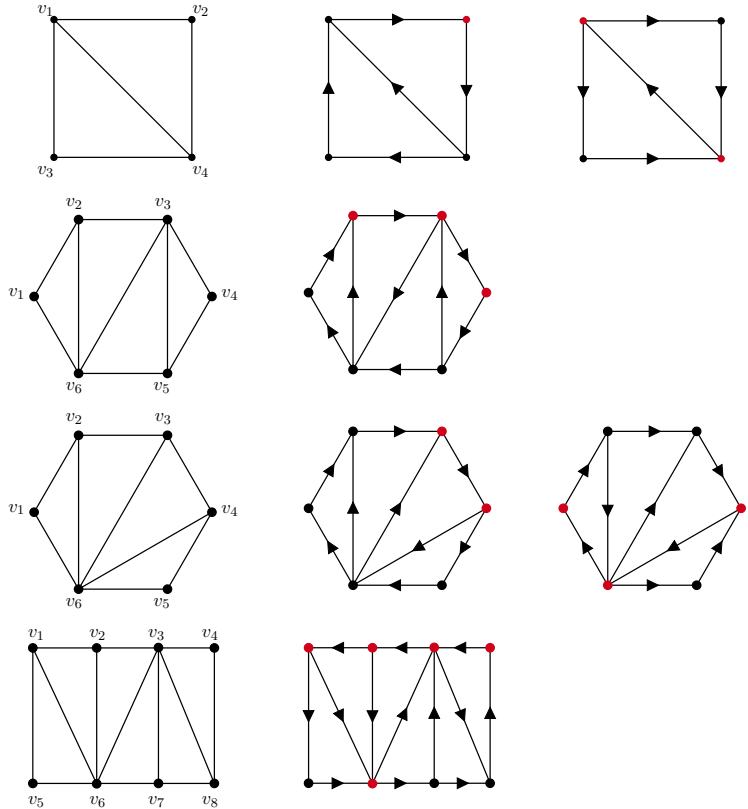}}
    }
    \caption{Explicit optimal orientations to exceptions in Theorem \ref{thm:outerplanr_diam}.}
    \label{fig:exceptions_orientation}
    \end{figure}
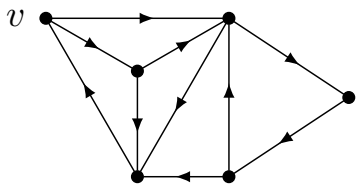
\begin{figure}[htb]
    \hbox to \hsize{
	\hfil
	\resizebox{5cm}{!}{\begin{tikzpicture}[scale=1, Wvertex/.style={circle, draw=black, fill=white, scale=3}, bvertex/.style={circle, draw=black, fill=black, scale=0.3},rvertex/.style={circle, draw=red, fill=red, scale=0.2}, decoration={markings, 
	mark= at position 0.6 with {\arrow{latex}}}]

\node [bvertex,  label={[font=\small] left:$v$}] (v1) at (150:1) {};
\node [bvertex] (v2) at (0,0) {};
\node [bvertex] (v3) at (30:1) {};
\node [bvertex] (v4) at (2,-0.25) {};
\node [bvertex] (v5) at (0.866,-1) {};
\node [bvertex] (v6) at (-90:1) {};

\draw [postaction={decorate}] (v1) -- (v2);
\draw [postaction={decorate}] (v2) -- (v3);
\draw [postaction={decorate}] (v3) -- (v4);
\draw [postaction={decorate}] (v4) -- (v5);
\draw [postaction={decorate}] (v5) -- (v6);
\draw [postaction={decorate}] (v6) -- (v1);
\draw [postaction={decorate}] (v2) -- (v6);
\draw [postaction={decorate}] (v3) -- (v6);
\draw [postaction={decorate}] (v5) -- (v3);
\draw [postaction={decorate}] (v1) -- (v3);

\end{tikzpicture}	}%
	\hfil
    }
    \caption{An optimal orientation of $G_6^3$.}
    \label{fig:optimalorientation}
    \end{figure}
\begin{lemma}\label{lem:planartriangulation_vtx}
Let $G$ be an $n$-vertex near triangulation in $\{K_4, G_6^3\}$ with outer cycle $C$ and let $v$ be a vertex in $C$ such that $d(v)=3$ and $v$ has a neighbor contained in $V(G)\backslash V(C)$. Then there exists an optimal orientation $D$ of $G$ such that $\diam(D)=\ordiam(G)=\frac{n}{2}+1$ and $\max\{d_D(u,v),d_D(v,u)\}\leq\frac{n}{2}$ for all $u\in V(G)$.
\end{lemma}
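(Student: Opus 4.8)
The plan is to treat this as a finite verification, since the hypothesis restricts $G$ to exactly two graphs. For $G=K_4$ we have $n=4$, so the targets are $\diam(D)=\ordiam(G)=3$ and $\max\{d_D(u,v),d_D(v,u)\}\le 2$; here the outer cycle $C$ is a triangle, the unique interior vertex is the one not on $C$, and every vertex of $C$ has degree $3$ and is adjacent to that interior vertex, so by the symmetry of $K_4$ any admissible choice of $v$ is equivalent. For $G=G_6^3$ we have $n=6$, so the targets are $\diam(D)=\ordiam(G)=4$ and $\max\{d_D(u,v),d_D(v,u)\}\le 3$; reading $G_6^3$ off Figure~\ref{fig:exceptions}, the outer cycle is $C=v_1v_3v_4v_5v_6v_1$, the unique interior vertex is $v_2$ with $N(v_2)=\{v_1,v_3,v_6\}$, and among the neighbors of $v_2$ on $C$ only $v_1$ has degree $3$. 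Hence in $G_6^3$ the hypothesis forces $v=v_1$, which is exactly the vertex marked $v$ in Figure~\ref{fig:optimalorientation}.

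First I would exhibit explicit orientations and read off the required distances. For $G_6^3$ I would take the orientation $D$ of Figure~\ref{fig:optimalorientation}, whose arcs are $v_1v_2, v_1v_3, v_2v_3, v_2v_6, v_3v_4, v_3v_6, v_4v_5, v_5v_3, v_5v_6, v_6v_1$. A direct breadth-first computation from each vertex shows $D$ is strongly connected with $\diam(D)=4$, the value $4$ being attained by $d_D(v_4,v_2)=4$: the only arc into $v_2$ is $v_1v_2$ and the only arc into $v_1$ is $v_6v_1$, so every $v_2$-directed path is forced to end $\dots v_6v_1v_2$, whence from $v_4$ the shortest such route is $v_4v_5v_6v_1v_2$. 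For the marked vertex $v=v_1$ one checks $d_D(v_1,\cdot)\le 3$ (farthest $d_D(v_1,v_5)=3$) and $d_D(\cdot,v_1)\le 3$ (farthest $d_D(v_4,v_1)=3$), giving the claimed bound $3=\tfrac n2$. For $K_4$ with vertex set $\{v,p,q,r\}$ I would use the tournament with arcs $vp, pq, pr, qr, qv, rv$; then $d_D(v,p)=1$, $d_D(v,q)=d_D(v,r)=2$, while $d_D(q,v)=d_D(r,v)=1$ and $d_D(p,v)=2$, so $\max\{d_D(u,v),d_D(v,u)\}\le 2$, and the pair $d_D(r,q)=3$ (forced through $rvpq$) shows $\diam(D)=3$.

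It remains to confirm optimality, i.e.\ the matching lower bound $\ordiam(G)\ge \tfrac n2+1$. For $K_4$ this is the standard fact that no strongly connected tournament on four vertices has diameter $2$: a vertex of out-degree $3$ would have in-degree $0$, so every out-degree is at most $2$, the score sequence must be $(1,1,2,2)$, and a short check then produces an ordered pair at distance $3$ in every case. For $G_6^3$ the lower bound comes from a short case analysis anchored at the unique degree-$2$ vertex $v_4$, whose two incident edges must be oriented either $v_3v_4v_5$ or $v_5v_4v_3$; in each case one tracks how the forced structure interacts with the dominating vertex $v_3$ and with the interior vertex $v_2$ (accessible only through $v_1,v_3,v_6$) to locate a pair at distance at least $4$.

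The verification is routine; the only step needing genuine care is this lower bound for $G_6^3$. There the two admissible orientations $v_3v_4v_5$ and $v_5v_4v_3$ of the edges at $v_4$ are \emph{not} related by any automorphism of $G_6^3$ (the automorphism group fixes the degree-$5$ vertex $v_3$, the degree-$2$ vertex $v_4$ and the degree-$4$ vertex $v_6$, its only nontrivial element being the transposition $v_1\leftrightarrow v_2$), so both genuinely must be treated; one then argues in each that the interior vertex $v_2$ sits at distance at least $4$ from some vertex. Everything else reduces to reading distances off two fixed digraphs.
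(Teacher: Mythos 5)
Your construction agrees with the paper's where it matters: for $G_6^3$ you use exactly the orientation of Figure~\ref{fig:optimalorientation}, and your distance computations (diameter $4$ witnessed by $d_D(v_4,v_2)=4$, and $\max\{d_D(u,v_1),d_D(v_1,u)\}\le 3$ for all $u$) are accurate; for $K_4$ your explicit tournament achieves directly what the paper gets by extending the $K_4^-$ orientation of Figure~\ref{fig:exceptions_orientation} via the spanning-subgraph observation (Lemma~\ref{lem:spanning_subgraph}). The genuine difference lies in the optimality claims $\ordiam(K_4)=3$ and $\ordiam(G_6^3)=4$: the paper's proof does not argue these lower bounds at all (they are implicitly delegated to the computer verification of small cases cited in Section~\ref{sec:main-theorem}, these two graphs being among the listed exceptions), whereas you prove the $K_4$ bound completely by the score-sequence argument and outline---but do not execute---a case analysis for $G_6^3$. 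That unexecuted analysis is the only loose end in your write-up; it is a finite, routine check, and it can be halved: although, as you correctly note, no automorphism of $G_6^3$ exchanges the two orientations $v_3v_4v_5$ and $v_5v_4v_3$ of the edges at $v_4$, reversing every arc of an orientation does exchange them while preserving strong connectivity and diameter, so analyzing one of the two cases suffices. Finally, in the paper's applications of this lemma (Cases 2 and 3 of the proof of Theorem~\ref{thm:near_triangulation_diameter}) only the upper-bound half is ever invoked, so the piece you leave as a sketch is also the part of the statement that carries no weight downstream.
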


\begin{proof} Since we give such orientation for $K_4^-$ in Figure~\ref{fig:exceptions_orientation} and $K_4^-$ is a spanning subgraph of $K_4$, we only need to give an orientation of $G_6^3$ satisfying the condition. We show this in Figure~\ref{fig:optimalorientation}. Note that there is a unique vertex $v$ in $G_6^3$ satisfying the lemma. This proves Lemma~\ref{lem:planartriangulation_vtx}.
\end{proof}

We also show the following lemma, which describes some structural properties of certain near triangulations.

\begin{lemma}\label{lem:deg2vtx}
Let $G$ be a near triangulation with outer cycle $C$ such that $V(G)\backslash V(C) \ne \emptyset$ and there is no vertex $v\in V(G)\backslash V(C)$ such that $v, u,w $ form a facial triangle with some $e=uw\in E(C)$.  Let $S:=\{ v\in V(C): \text{ $v$ has a neighbor in $V(G)\backslash V(C)$} \}$. Then the following statements hold.
\begin{itemize}
\item [(i)] $|S|\ge 3$.
\item [(ii)] $G$ has at least three vertices of degree two contained in $V(C)$. If $G$ has exactly three degree two vertices contained in $V(C)$, then $|S|=3$ and $T:= G[S]$ is a triangle such that all vertices in $V(G)\backslash V(C)$ are contained in the interior of $\overline{T}$.
\end{itemize}
\begin{proof} Since $G$ is a near triangulation with outer cycle $C$ and  $V(G)\backslash V(C) \ne \emptyset$, we know that $S\ne \emptyset$. It follows that $|S|\ge 3$ as $G$ is simple. We apply induction on $|V(G)|$ to show (ii). Assume that (ii) holds for all such near triangulations on smaller than $|V(G)|$ vertices.

Let $C=v_1v_2\ldots v_k v_1$ and $S=\{u_1, u_2, \ldots, u_p\}$ for some integer $p\ge 3$ such that $v_1, v_2, \ldots, v_k$ appear on $C$ in clockwise order and $u_1, u_2, \ldots, u_p$ appear on $C$ in clockwise order. For convenience, we assume $v_{k+1}=v_1$ and $u_{p+1}=u_1$. Suppose $u_iu_{i+1}\in E(C)$ for some $i$ with $1\le i \le p$. We may assume that $u_1u_2\in E(C)$. Note that there is no vertex $v\in V(G)\backslash V(C)$ such that $v$ is adjacent to $u_1$ and $u_2$ by our assumptions on $G$. Since $G$ is a near triangulation with outer cycle $C$, $u_1$ and $u_2$ must have a common neighbor $w$ that lies on $C$ such that $u_1u_2w u_1$ is a facial triangle and $u_1w, u_2w\notin E(C)$ as $u_i\in S$ for each $i\in [2]$. Consider the cycles $C_1=v C u_1 u_2 w$ and $C_2=u_2 C w u_1 u_2$. 
Observe that since $u_1, u_2\in S$, we have that for each $i=1,2$, $\overline{C_i}$ is a near triangulation with outer cycle $C_i$ such that $V(\overline{C_i})\backslash V(C_i) \ne \emptyset$ and for any $v\in V(\overline{C_i})\backslash V(C_i)$, $v$ and $e$ don't form a facial triangle in $\overline{C_i}$ for every $e\in E(C_i)$. By induction, $\overline{C_i}$ has at least three vertices of degree two contained in $C_i$. We know that $d_{\overline{C_i}} (u_{3-i})=2, d_{\overline{C_i}} (u_i)\ge 3$, and $d_{\overline{C_i}} (w)\ge 3$. It follows that $G$ has at least two degree-$2$ vertices contained in $V(wCu_1)$ and at least two degree-$2$ vertices contained in $V(u_2 C w)$. Hence $G$ has at least four vertices of degree two contained in $V(C)$. Now we assume that $u_iu_{i+1}\notin E(C)$ for all $i\in [p]$. Suppose $u_iu_{i+1}\notin E(G)$ for some $i\in [p]$, i.e., $u_i$ and $u_{i+1}$ are not adjacent in $G$. Without loss of generality, we may assume $u_1u_2\notin E(G)$, $u_1=v_1, u_2=v_t$ for some integer $t\ge 3$. 
It follows that $v_2\notin S$ and $v_{t-1}\notin S$. Let $w_1$ be the common neighbor of $u_1$ and $v_2$ lying on $C$, and $w_2$ be the common neighbor of $u_2$ and $v_{t-1}$ such that $u_1v_2w_1u_1, u_2v_{t-1}w_2u_2$ are facial triangles in $G$. Observe that $w_1\ne u_1$ and $w_2\ne u_2$ as $u_1u_2\notin E(G)$. If $w_1$ is an internal vertex of $u_2 C u_1$ then let $D_1:=w_1C u_1v_2 w_1$ and $D_2:= u_1 v_2 C w_1 u_1$.
Note that $u_1 v_2 w_1 u_1$ is a facial cycle, $u_1, u_2\in S$ and $v_2\notin S$. It follows that for each $i\in [2]$, $\overline{D_i}$ is a near triangulation with outer cycle $D_i$ such that $V(\overline{D_i})\backslash V(D_i) \ne \emptyset$ and there is no $v\in V(\overline{D_i})\backslash V(D_i)$ such that $v$ and $e$ form a facial triangle in $\overline{D_i}$ for some $e\in E(D_i)$. We apply induction to $\overline{D_1}, \overline{D_2}$, and similar to before, we obtain at least two vertices of degree two contained in $w_1C u_1$ and two other vertices of degree two contained in $v_2 C w_1$. Hence we obtain at least four vertices of degree two contained in $C$. Similarly, we obtain at least four vertices of degree two in $C$ if $w_2$ is an internal vertex of $u_2Cu_1$. Thus we can assume that both $w_1$ and $w_2$ are contained in $v_2 C v_{t-1}$. Now let $w_1'$ be the neighbor of $u_1$ in $v_2 C v_{t-1}$ with the maximum index (i.e., furthest away from $u_1$ in $C$). Note that $w_1'\notin S$. Hence the common neighbor $z$ of $u_1$ and $w_1'$ that forms a facial triangle with $u_1w_1'$ must lie on $u_2 C u_1$ and $z \notin \{u_2, v_k\}$. Let $D_1:= z C w_1' z$. Applying induction to $\overline{D_1}$, we obtain that there are at least three vertices of degree two which are internal vertices of $z C w_1'$ (as $d_{\overline{D_1}}(w_1')\geq 3$ and $d_{\overline{D_1}}(z) \geq 3$). Moreover, observe that $w_2 C u_2 w_2$ is a maximal outerplanar graph. Thus there exists at least one vertex of degree two which is an internal vertex of $w_2 C u_2$. Hence we obtain at least four vertices of degree two in $C$.

Now we may assume that $u_iu_{i+1}\in E(G)\backslash E(C)$ for all $i \in [j]$. We know that every internal vertex in $u_i C u_{i+1}$ has no neighbor in $V(G)\backslash V(C)$. Let $C_i'=u_iC u_{i+1} u_i$. Then $\overline{C_i'}$ is a maximal outerplanar graph with outer cycle $C_i'$. Recall that every outerplanar graph has at least two vertices of degree $2$ (since the dual of an outerplanar graph is a forest).
Hence there are at least two vertices of degree two in $\overline{C_i'}$. Note that either $d_{\overline{C_i'}(u_i)}\ge 3$ or $d_{\overline{C_i'}(u_{i+1})}\ge 3$. It follows that $G$ has at least one vertex of degree two contained in $V(u_i C u_{i+1})$. Since $k\ge 3$, we know $G$ has at least three vertices of degree two in $V(C)$. If $G$ has exactly three vertices of degree two in $V(C)$, then it follows from the above argument that  $S=\{u_1, u_2, u_3\}$, $u_iu_{i+1}\in E(G)\backslash E(C)$ and $T=G[S]=u_1u_2u_3u_1$ is a triangle in $G$ such that $V(G)\backslash V(C) \subseteq V(\overline{T}-T)$.
\end{proof}
\end{lemma}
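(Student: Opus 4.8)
The plan is to dispatch (i) quickly and prove (ii) by induction on $|V(G)|$. For (i), since $V(G)\setminus V(C)\ne\emptyset$ and $G$ is connected we have $S\ne\emptyset$, and a short argument from $2$-connectivity, simplicity, and the fact that every interior vertex of a near triangulation has degree at least three rules out $|S|\le 2$ (a set of at most two boundary vertices adjacent to the interior would force a cut vertex, a multi-edge, or a degree-$2$ interior vertex). I would record this in the slightly stronger hypothesis-free form --- \emph{every} near triangulation with nonempty interior has at least three boundary vertices with an interior neighbour --- since I plan to reapply it to subpieces.

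For (ii), write $C=v_1v_2\cdots v_kv_1$ and $S=\{u_1,\dots,u_p\}$ in clockwise order, with $p\ge 3$ by (i). The engine is to look at each arc $u_iCu_{i+1}$ between cyclically consecutive elements of $S$, whose internal vertices all lie outside $S$, and to split into three cases according to how $u_i$ and $u_{i+1}$ are joined. \emph{All chords:} if $u_iu_{i+1}\in E(G)\setminus E(C)$ for every $i$, then the chords $u_1u_2,\dots,u_pu_1$ enclose a core containing the whole interior, and each cap $\overline{u_iCu_{i+1}u_i}$ is a maximal outerplanar graph --- a cap with an interior vertex would be a near triangulation whose interior attaches to only $u_i,u_{i+1}$, contradicting the strengthened (i). Every cap then has at least two non-adjacent degree-$2$ vertices, so at least one lies strictly inside its arc and is genuinely degree $2$ in $G$; summing over the $p\ge 3$ caps gives at least three, and equality forces $p=3$ with $G[S]$ a triangle whose closed disc contains $V(G)\setminus V(C)$.

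In the remaining two cases I aim for at least \emph{four} degree-$2$ vertices, so that equality can only come from the all-chord case. \emph{An outer edge:} if some $u_iu_{i+1}\in E(C)$, say $u_1u_2$, its inner facial triangle $u_1u_2w$ has $w\in V(C)$ with $u_1w,u_2w$ chords (here $u_1,u_2\in S$ is used). I would cut along both chords but keep the face $u_1u_2w$ inside \emph{each} resulting piece: let $\overline{C_1}$ be bounded by $wCu_1$ together with $u_1u_2$ and $u_2w$, and $\overline{C_2}$ symmetrically by $u_2Cw$ together with $u_1u_2$ and $u_1w$. Retaining the triangle is precisely what makes each new boundary chord's inner face have its apex ($u_1$ or $u_2$) on the boundary, so both pieces still satisfy the forbidden-facial-triangle hypothesis and still have interior vertices; by induction each has at least three degree-$2$ vertices, and after discarding the degree-$2$ ``ear'' and the high-degree vertices $u_1,u_2,w$ there remain at least two strictly inside each of the disjoint arcs $wCu_1$ and $u_2Cw$, all of them degree $2$ in $G$ (an arc-internal element of $S$ would keep its interior neighbours inside the piece and hence have degree at least three). \emph{A non-edge:} if no consecutive pair is an outer edge but some pair $u_1=v_1,u_2=v_t$ is non-adjacent, I would use the facial triangles $v_1v_2w_1$ and $v_{t-1}v_tw_2$ at the non-$S$ vertices $v_2,v_{t-1}$; according to whether $w_1$ (resp.\ $w_2$) lies on the near arc $u_2Cu_1$ or inside $v_2Cv_{t-1}$, I split off smaller near triangulations satisfying both hypotheses and again harvest at least four degree-$2$ vertices.

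The difficulty is not the counting but ensuring that every subpiece inherits both hypotheses at once --- in particular that no newly created boundary edge bounds a facial triangle with an interior apex. Keeping the relevant facial triangle inside the piece (so its apex sits on the new boundary, forcing the apex to be a boundary vertex) is the device that rescues this, and the non-edge case is where the bookkeeping is most delicate: tracking which apex lands on which sub-arc, confirming the pieces are strictly smaller and still have interior vertices, and checking that the harvested degree-$2$ vertices are distinct and genuinely degree $2$ in $G$ rather than concealed members of $S$.
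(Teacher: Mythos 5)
Your overall framework is the same as the paper's: induction on $|V(G)|$, the three-way case split according to how cyclically consecutive members of $S$ are joined, and the device of cutting along a facial triangle while keeping that triangle inside each resulting piece so that the new boundary edges inherit the forbidden-facial-triangle hypothesis. Your treatment of (i), of the all-chords case (including the equality analysis), and of the outer-edge case coincides with the paper's argument and is correct.

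The gap is in the non-edge case, exactly in the sub-case you flag as ``delicate'' but never resolve: when \emph{both} apexes $w_1$ and $w_2$ lie inside $v_2Cv_{t-1}$. Your device does not apply there as stated. If you cut along the facial triangle $u_1v_2w_1$ (keeping it in both pieces), the piece containing the interior of $G$ is bounded by $w_1Cu_1$, $u_1v_2$ and $v_2w_1$; when $w_1=v_3$ this piece \emph{is} $G$, so the induction makes no progress at all, and even when the piece is proper the harvest comes up short: induction gives it at least three degree-$2$ vertices, but one of them can be $v_2$ (degree $2$ in the piece, not in $G$), so together with the single vertex supplied by the outerplanar cap on the other side you only reach three, not the four needed to confine the equality case to the all-chords configuration. (This can be repaired by invoking the \emph{structural} half of the induction hypothesis --- the big piece cannot have exactly three degree-$2$ vertices, since its $S$-set contains the non-adjacent pair $u_1,u_2$ and hence cannot induce a triangle --- but you never make that argument.) The paper instead introduces a genuinely new ingredient at this point: let $w_1'$ be the neighbor of $u_1$ in $v_2Cv_{t-1}$ \emph{furthest} from $u_1$; because $w_1'\notin S$ and by this maximality, the facial triangle on the far side of the chord $u_1w_1'$ has its apex $z$ forced onto $u_2Cu_1$ with $z\notin\{u_2,v_k\}$. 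The cycle $D_1=zCw_1'z$ --- whose boundary path passes through $u_1$, and which contains the face $u_1w_1'z$, hence inherits both hypotheses and captures all interior neighbors of $u_1$ --- yields three degree-$2$ vertices by induction, and the maximal outerplanar graph $\overline{w_2Cu_2w_2}$ yields a fourth, disjoint from the others. Without this step (or the repair sketched above) your induction does not close.
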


\section{Proof of Theorem \ref{thm:near_triangulation_diameter}}\label{sec:main-theorem}

We will show Theorem \ref{thm:near_triangulation_diameter} by induction on the number of vertices in $G$. Theorem \ref{thm:near_triangulation_diameter} for near triangulations on $n\leq 8$ vertices are confirmed by computer searches\footnote{See the code in \url{https://github.com/wzy3210/Oriented_diameter}.}. So we assume that $n\geq 9$.  Let $G_0$ be an $n$-vertex near triangulation with $n\ge 9$. Note that $G$ is not one of the graphs in $\{K_4^-, K_4, W_5, G_6^1, G_6^2, G_6^3, G_8^1\}$. Let $C_0$ be the outer cycle of $G_0$. We remove an edge $e=uw\in E(C_0)$ from $C_0$ if there exists a vertex $v\in V(G_0)\backslash V(C_0)$ such that $v, u, w$ form a facial triangle. Note that $G_0-e$ is an $n$-vertex $2$-connected near triangulation with the outer cycle $(C_0-e) \cup uvw$. We perform the same process on $G-e$ and repeat this process until we get an $n$-vertex $2$-connected near triangulation $G$ with outer cycle $C$ such that there is no vertex $v$ in $V(G)\backslash V(C)$ that forms a facial triangle with some $e\in E(C)$. By Lemma \ref{lem:spanning_subgraph}, $\ordiam(G_0) \leq \ordiam(G)$ since $G$ is a spanning subgraph of $G_0$. Hence it suffices to show that $\ordiam(G)\leq \ceil*{\frac{n}{2}}$. 

Suppose $V(G)\backslash V(C)=\emptyset$. Then $G$ is a maximal outerplanar graph on at least $9$ vertices, and we have $\ordiam(G)\le \ceil*{\frac{n}{2}}$ by Theorem \ref{thm:outerplanr_diam}. Hence we may assume that $V(G)\backslash V(C)\ne \emptyset$. Let $A$ be the set of vertices of degree two in $G$ (which all lie on $C$). It follows from Lemma~\ref{lem:deg2vtx} that $|A|\ge 3$. If $|A|\ge 4$, by Lemmas~\ref{lem:four_deg2}, we apply induction on $G-A'$, where $A'\subseteq A$ and $|A'|=4$. Note that $G-A'$ is a smaller near triangulation on at least $5$ vertices such that $G-A'$ is not outerplanar and hence $\ordiam(G)\le \ordiam(G-A')+2\le \ceil*{\frac{n}{2}}$ unless $G-A'\in\{W_5, G_6^3\}$. If $G-A'\in\{W_5,G_6^3\}$,  we know that $n=10$ and the outer cycle of $G-A'$ has length five. Since $|A'|=4$, there exist two degree-$2$ vertices $u,v$ in $G$ such that $N(u)\cap N(v)\ne \emptyset$. Let $H=G-\{u, v\}$. Then it follows from Lemma~\ref{lem:two_deg2} that $\ordiam(G)\le \max\{ \ordiam(H)+1, 4\}$. Since $H=G-\{u,v\}$ is a near triangulation on $8$ vertices and $H$ is not outerplanar, $\ordiam(H)\le \ceil*{\frac{8}{2}}=4$. Therefore, $\ordiam(G)\le 5$.

Now we assume that $|A|=3$. Let $S=\{ v\in V(C): \text{ $v$ has a neighbor in $V(G)\backslash V(C)$} \}$.  By Lemma~\ref{lem:deg2vtx}, $|S|=3$ and $G[S]$ is a triangle such that all vertices in $V(G)\backslash V(C)$ are contained in the interior of $G[S]$. Let $T=G[S]=u_1u_2u_3u_1$ such that $u_1,u_2,u_3$ appear on $C$ in clockwise order. Note that $\overline{T}$ is a planar triangulation on at least $4$ vertices. For convenience, let $u_4=u_1$. Let $C_i:= u_i C u_{i+1} u_i $ for each $i\in [3]$. We know that $O_i:=\overline{C_i}$ is a maximal outerplanar graph with $|V(O_i)|\ge 3$. Since $G$ has exactly three degree-$2$ vertices, there is exactly one degree-$2$ vertex, say $v_i$, in $V(u_i C u_{i+1})$ for every $i\in [3]$. Hence $A=\{v_1, v_2, v_3\}$.

\begin{figure}[htb]
    \hbox to \hsize{
	\hfil
	\resizebox{4cm}{!}{
 
\tikzset{
pattern size/.store in=\mcSize, 
pattern size = 5pt,
pattern thickness/.store in=\mcThickness, 
pattern thickness = 0.3pt,
pattern radius/.store in=\mcRadius, 
pattern radius = 1pt}
\makeatletter
\pgfutil@ifundefined{pgf@pattern@name@_il56rc5sm lines}{
\pgfdeclarepatternformonly[\mcThickness,\mcSize]{_il56rc5sm}
{\pgfqpoint{0pt}{0pt}}
{\pgfpoint{\mcSize+\mcThickness}{\mcSize+\mcThickness}}
{\pgfpoint{\mcSize}{\mcSize}}
{\pgfsetcolor{\tikz@pattern@color}
\pgfsetlinewidth{\mcThickness}
\pgfpathmoveto{\pgfpointorigin}
\pgfpathlineto{\pgfpoint{\mcSize}{0}}
\pgfusepath{stroke}}}
\makeatother

 
\tikzset{
pattern size/.store in=\mcSize, 
pattern size = 5pt,
pattern thickness/.store in=\mcThickness, 
pattern thickness = 0.3pt,
pattern radius/.store in=\mcRadius, 
pattern radius = 1pt}
\makeatletter
\pgfutil@ifundefined{pgf@pattern@name@_j0dm7k0xv lines}{
\pgfdeclarepatternformonly[\mcThickness,\mcSize]{_j0dm7k0xv}
{\pgfqpoint{0pt}{0pt}}
{\pgfpoint{\mcSize+\mcThickness}{\mcSize+\mcThickness}}
{\pgfpoint{\mcSize}{\mcSize}}
{\pgfsetcolor{\tikz@pattern@color}
\pgfsetlinewidth{\mcThickness}
\pgfpathmoveto{\pgfpointorigin}
\pgfpathlineto{\pgfpoint{\mcSize}{0}}
\pgfusepath{stroke}}}
\makeatother

 
\tikzset{
pattern size/.store in=\mcSize, 
pattern size = 5pt,
pattern thickness/.store in=\mcThickness, 
pattern thickness = 0.3pt,
pattern radius/.store in=\mcRadius, 
pattern radius = 1pt}
\makeatletter
\pgfutil@ifundefined{pgf@pattern@name@_shiml2p6c lines}{
\pgfdeclarepatternformonly[\mcThickness,\mcSize]{_shiml2p6c}
{\pgfqpoint{0pt}{0pt}}
{\pgfpoint{\mcSize+\mcThickness}{\mcSize+\mcThickness}}
{\pgfpoint{\mcSize}{\mcSize}}
{\pgfsetcolor{\tikz@pattern@color}
\pgfsetlinewidth{\mcThickness}
\pgfpathmoveto{\pgfpointorigin}
\pgfpathlineto{\pgfpoint{\mcSize}{0}}
\pgfusepath{stroke}}}
\makeatother
\tikzset{every picture/.style={line width=0.75pt}} 

\begin{tikzpicture}[x=0.75pt,y=0.75pt,yscale=-1,xscale=1]

\draw  [color={rgb, 255:red, 255; green, 255; blue, 255 }  ,draw opacity=1 ][fill={rgb, 255:red, 155; green, 155; blue, 155 }  ,fill opacity=0.3 ] (220.18,83.4) -- (306.37,220.93) -- (133.99,220.93) -- cycle ;
\draw    (219.93,83.4) -- (133.99,220.93) ;
\draw    (305.87,220.93) -- (133.99,220.93) ;
\draw    (219.93,83.4) -- (305.87,220.93) ;
\draw  [draw opacity=0][pattern=_il56rc5sm,pattern size=15pt,pattern thickness=0.75pt,pattern radius=0pt, pattern color={rgb, 255:red, 0; green, 0; blue, 0}] (133.98,220.93) .. controls (111.68,207.07) and (111.3,165.39) .. (133.34,127.08) .. controls (155.66,88.29) and (192.52,67.63) .. (215.67,80.94) .. controls (218,82.29) and (220.1,83.92) .. (221.97,85.81) -- (175.25,151.19) -- cycle ; \draw   (133.98,220.93) .. controls (111.68,207.07) and (111.3,165.39) .. (133.34,127.08) .. controls (155.66,88.29) and (192.52,67.63) .. (215.67,80.94) .. controls (218,82.29) and (220.1,83.92) .. (221.97,85.81) ;  
\draw  [draw opacity=0][pattern=_j0dm7k0xv,pattern size=15pt,pattern thickness=0.75pt,pattern radius=0pt, pattern color={rgb, 255:red, 0; green, 0; blue, 0}] (306.37,220.93) .. controls (328.67,207.07) and (329.05,165.4) .. (307.01,127.09) .. controls (284.7,88.29) and (247.84,67.64) .. (224.69,80.95) .. controls (222.36,82.29) and (220.26,83.93) .. (218.38,85.82) -- (265.1,151.2) -- cycle ; \draw   (306.37,220.93) .. controls (328.67,207.07) and (329.05,165.4) .. (307.01,127.09) .. controls (284.7,88.29) and (247.84,67.64) .. (224.69,80.95) .. controls (222.36,82.29) and (220.26,83.93) .. (218.38,85.82) ;  
\draw  [draw opacity=0][pattern=_shiml2p6c,pattern size=15pt,pattern thickness=0.75pt,pattern radius=0pt, pattern color={rgb, 255:red, 0; green, 0; blue, 0}] (136.78,222.13) .. controls (136.74,222.66) and (136.73,223.19) .. (136.73,223.72) .. controls (136.73,247.07) and (174.45,266) .. (220.99,266) .. controls (267.52,266) and (305.25,247.07) .. (305.25,223.72) .. controls (305.25,220.93) and (304.71,218.2) .. (303.68,215.56) -- (220.99,223.72) -- cycle ; \draw   (136.78,222.13) .. controls (136.74,222.66) and (136.73,223.19) .. (136.73,223.72) .. controls (136.73,247.07) and (174.45,266) .. (220.99,266) .. controls (267.52,266) and (305.25,247.07) .. (305.25,223.72) .. controls (305.25,220.93) and (304.71,218.2) .. (303.68,215.56) ;  
\draw  [fill={rgb, 255:red, 0; green, 0; blue, 0 }  ,fill opacity=1 ] (213.91,83.4) .. controls (213.91,86.72) and (216.6,89.42) .. (219.93,89.42) .. controls (223.26,89.42) and (225.95,86.72) .. (225.95,83.4) .. controls (225.95,80.07) and (223.26,77.38) .. (219.93,77.38) .. controls (216.6,77.38) and (213.91,80.07) .. (213.91,83.4) -- cycle ;
\draw  [fill={rgb, 255:red, 0; green, 0; blue, 0 }  ,fill opacity=1 ] (130.76,222.13) .. controls (130.76,225.46) and (133.46,228.16) .. (136.78,228.16) .. controls (140.11,228.16) and (142.81,225.46) .. (142.81,222.13) .. controls (142.81,218.81) and (140.11,216.11) .. (136.78,216.11) .. controls (133.46,216.11) and (130.76,218.81) .. (130.76,222.13) -- cycle ;
\draw  [fill={rgb, 255:red, 0; green, 0; blue, 0 }  ,fill opacity=1 ] (297.65,221.58) .. controls (297.65,224.9) and (300.35,227.6) .. (303.68,227.6) .. controls (307,227.6) and (309.7,224.9) .. (309.7,221.58) .. controls (309.7,218.25) and (307,215.56) .. (303.68,215.56) .. controls (300.35,215.56) and (297.65,218.25) .. (297.65,221.58) -- cycle ;

\draw (141.2,139.48) node [anchor=north west][inner sep=0.75pt]    {$O_3$};
\draw (276.49,137.48) node [anchor=north west][inner sep=0.75pt]    {$O_1$};
\draw (207.84,239.65) node [anchor=north west][inner sep=0.75pt]    {$O_2$};
\draw (200.84,150.51) node [anchor=north west][inner sep=0.75pt]     {$\overline{T}$};
\draw (213.34,58.4) node [anchor=north west][inner sep=0.75pt]    {$u_1$};
\draw (308.37,226.37) node [anchor=north west][inner sep=0.75pt]    {$u_2$};
\draw (121,226.37) node [anchor=north west][inner sep=0.75pt]    {$u_3$};

\end{tikzpicture}}%
	\hfil
    }
    \caption{$G= \overline{T} \cup O_1 \cup O_2 \cup O_3$.}
    \label{fig:G_decomposition}
    \end{figure}
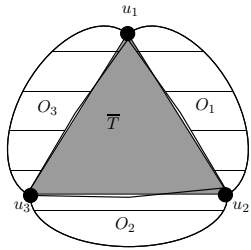

Case 1: Suppose there are at least two $O_i$ with $|V(O_i)|=3$. Without loss of generality, we may assume $|V(O_1)|=|V(O_2)|=3$. Then $v_1$ and $v_2$ have a common neighbor $u_2$. Let $H=G-\{v_1, v_2\}$. Note that $H$ is a near triangulation on  $n-2$ vertices (with $n-2\ge 7$) and $H$ is not outerplanar. This implies that $H$ is not one of the exception graphs, and hence, $\ordiam(H)\le \ceil* {\frac{n-2}{2}}$ by induction. It follows from Lemma~\ref{lem:two_deg2} that $\ordiam(G)\le \max \{ \ordiam(H)+1, 4\}\le \ceil*{\frac{n}{2}}$. 

\vspace{2mm}

Case 2: Suppose there is exactly one $O_i$ with $|V(O_i)|=3$ and we may assume that $|V(O_3)|=3$, i.e., $O_3=v_3 u_1 u_3 v_3$. Let $n_i:=|V(O_i)|$ for each $i\in [2]$ and let $n_T: =|V(\overline{T})|$. Then $n_i\ge 4$ for $i\in [2]$ and $n_T\ge 4$. Since $n_1+n_2+n_T=n+3$, we have $n_i\le n-5$ for $i\in [2]$ and $n_T\le n-5$. First we orient the edges in $\overline{T}$. If $\overline{T}=K_4$, Lemma \ref{lem:planartriangulation_vtx} implies there exists an orientation $D_T$ of $\overline{T}$ such that $\diam(D_T)= 3= \frac{n_T}{2} +1$ and $ \max \{ d_{D_T}(u_2, w), d_{D_T}(w, u_2)\}\le 2 =\frac{n_T}{2}$. If $\overline{T}\ne K_4$, we apply induction to $\overline{T}$ and get an optimal orientation $D_T$ such that $\diam(D_T)\le \ceil*{\frac{n_T}{2}}$. Therefore, we give an orientation $D_T$ of $\overline{T}$ such that $\diam(D_T)\le \frac{n_T}{2} +1$ and $ \max \{ d_{D_T}(u_2, w), d_{D_T}(w, u_2)\}\le \ceil*{\frac{n_T}{2}}$. Next we orient the two edges incident with $v_3$ by making $O_3$ a directed triangle. Now we orient the edges in $O_i- u_iu_{i+1}$ for $i\in [2]$. Since $O_i$ is maximal outerplanar, $O_i$ has an orientation $D_i$ such that $\diam(D_i)\le \frac{n_i}{2} +1$ and $\max \{ d_{D_i} (u_2, w), d_{D_i} (w, u_2)\} \le \ceil*{\frac{n_i}{2}}$ by Lemma~\ref{lem:outerplanar_vtx}. Observe that $\overline{T}$ and $O_i$ have exactly one common edge for each $i\in [2]$. We can combine $D_i$ with $D_T$ (possibly by reversing the orientation of $D_i$). Therefore we get an orientation $D$ of $G$. 

\begin{figure}[htb]
    \hbox to \hsize{
	\hfil
	\resizebox{4.5cm}{!}{
 
\tikzset{
pattern size/.store in=\mcSize, 
pattern size = 5pt,
pattern thickness/.store in=\mcThickness, 
pattern thickness = 0.3pt,
pattern radius/.store in=\mcRadius, 
pattern radius = 1pt}
\makeatletter
\pgfutil@ifundefined{pgf@pattern@name@_868ikwcts lines}{
\pgfdeclarepatternformonly[\mcThickness,\mcSize]{_868ikwcts}
{\pgfqpoint{0pt}{0pt}}
{\pgfpoint{\mcSize+\mcThickness}{\mcSize+\mcThickness}}
{\pgfpoint{\mcSize}{\mcSize}}
{\pgfsetcolor{\tikz@pattern@color}
\pgfsetlinewidth{\mcThickness}
\pgfpathmoveto{\pgfpointorigin}
\pgfpathlineto{\pgfpoint{\mcSize}{0}}
\pgfusepath{stroke}}}
\makeatother

 
\tikzset{
pattern size/.store in=\mcSize, 
pattern size = 5pt,
pattern thickness/.store in=\mcThickness, 
pattern thickness = 0.3pt,
pattern radius/.store in=\mcRadius, 
pattern radius = 1pt}
\makeatletter
\pgfutil@ifundefined{pgf@pattern@name@_ambav7zjo lines}{
\pgfdeclarepatternformonly[\mcThickness,\mcSize]{_ambav7zjo}
{\pgfqpoint{0pt}{0pt}}
{\pgfpoint{\mcSize+\mcThickness}{\mcSize+\mcThickness}}
{\pgfpoint{\mcSize}{\mcSize}}
{\pgfsetcolor{\tikz@pattern@color}
\pgfsetlinewidth{\mcThickness}
\pgfpathmoveto{\pgfpointorigin}
\pgfpathlineto{\pgfpoint{\mcSize}{0}}
\pgfusepath{stroke}}}
\makeatother
\tikzset{every picture/.style={line width=0.75pt}} 

\begin{tikzpicture}[x=0.75pt,y=0.75pt,yscale=-1,xscale=1]

\draw  [color={rgb, 255:red, 255; green, 255; blue, 255 }  ,draw opacity=1 ][fill={rgb, 255:red, 155; green, 155; blue, 155 }  ,fill opacity=0.3 ] (243.18,61.4) -- (329.37,198.93) -- (156.99,198.93) -- cycle ;
\draw    (242.93,61.4) -- (156.99,198.93) ;
\draw    (328.87,198.93) -- (156.99,198.93) ;
\draw    (242.93,61.4) -- (328.87,198.93) ;
\draw  [draw opacity=0][pattern=_868ikwcts,pattern size=15pt,pattern thickness=0.75pt,pattern radius=0pt, pattern color={rgb, 255:red, 0; green, 0; blue, 0}] (329.37,198.93) .. controls (351.67,185.07) and (352.05,143.4) .. (330.01,105.09) .. controls (307.7,66.29) and (270.84,45.64) .. (247.69,58.95) .. controls (245.36,60.29) and (243.26,61.93) .. (241.38,63.82) -- (288.1,129.2) -- cycle ; \draw   (329.37,198.93) .. controls (351.67,185.07) and (352.05,143.4) .. (330.01,105.09) .. controls (307.7,66.29) and (270.84,45.64) .. (247.69,58.95) .. controls (245.36,60.29) and (243.26,61.93) .. (241.38,63.82) ;  
\draw  [draw opacity=0][pattern=_ambav7zjo,pattern size=15pt,pattern thickness=0.75pt,pattern radius=0pt, pattern color={rgb, 255:red, 0; green, 0; blue, 0}] (159.78,200.13) .. controls (159.74,200.66) and (159.73,201.19) .. (159.73,201.72) .. controls (159.73,225.07) and (197.45,244) .. (243.99,244) .. controls (290.52,244) and (328.25,225.07) .. (328.25,201.72) .. controls (328.25,198.93) and (327.71,196.2) .. (326.68,193.56) -- (243.99,201.72) -- cycle ; \draw   (159.78,200.13) .. controls (159.74,200.66) and (159.73,201.19) .. (159.73,201.72) .. controls (159.73,225.07) and (197.45,244) .. (243.99,244) .. controls (290.52,244) and (328.25,225.07) .. (328.25,201.72) .. controls (328.25,198.93) and (327.71,196.2) .. (326.68,193.56) ;  
\draw  [fill={rgb, 255:red, 0; green, 0; blue, 0 }  ,fill opacity=1 ] (236.91,61.4) .. controls (236.91,64.72) and (239.6,67.42) .. (242.93,67.42) .. controls (246.26,67.42) and (248.95,64.72) .. (248.95,61.4) .. controls (248.95,58.07) and (246.26,55.38) .. (242.93,55.38) .. controls (239.6,55.38) and (236.91,58.07) .. (236.91,61.4) -- cycle ;
\draw  [fill={rgb, 255:red, 0; green, 0; blue, 0 }  ,fill opacity=1 ] (153.76,200.13) .. controls (153.76,203.46) and (156.46,206.16) .. (159.78,206.16) .. controls (163.11,206.16) and (165.81,203.46) .. (165.81,200.13) .. controls (165.81,196.81) and (163.11,194.11) .. (159.78,194.11) .. controls (156.46,194.11) and (153.76,196.81) .. (153.76,200.13) -- cycle ;
\draw  [fill={rgb, 255:red, 0; green, 0; blue, 0 }  ,fill opacity=1 ] (320.65,199.58) .. controls (320.65,202.9) and (323.35,205.6) .. (326.68,205.6) .. controls (330,205.6) and (332.7,202.9) .. (332.7,199.58) .. controls (332.7,196.25) and (330,193.56) .. (326.68,193.56) .. controls (323.35,193.56) and (320.65,196.25) .. (320.65,199.58) -- cycle ;
\draw  [fill={rgb, 255:red, 0; green, 0; blue, 0 }  ,fill opacity=1 ] (126.91,76.4) .. controls (126.91,79.72) and (129.6,82.42) .. (132.93,82.42) .. controls (136.26,82.42) and (138.95,79.72) .. (138.95,76.4) .. controls (138.95,73.07) and (136.26,70.38) .. (132.93,70.38) .. controls (129.6,70.38) and (126.91,73.07) .. (126.91,76.4) -- cycle ;
\draw    (132.93,76.4) -- (159.78,200.13) ;
\draw    (132.93,76.4) -- (242.93,61.4) ;

\draw (164.2,108.48) node [anchor=north west][inner sep=0.75pt]    {$O_{3}$};
\draw (297.49,108.48) node [anchor=north west][inner sep=0.75pt]    {$O_{1}$};
\draw (230.84,217.65) node [anchor=north west][inner sep=0.75pt]    {$O_{2}$};
\draw (223.84,128.51) node [anchor=north west][inner sep=0.75pt]    {$\overline{T}$};
\draw (236.34,36.4) node [anchor=north west][inner sep=0.75pt]    {$u_1$};
\draw (331.37,204.37) node [anchor=north west][inner sep=0.75pt]    {$u_2$};
\draw (144,204.37) node [anchor=north west][inner sep=0.75pt]    {$u_3$};
\draw (107,58.4) node [anchor=north west][inner sep=0.75pt]    {$v_{3}$};

\end{tikzpicture}}%
	\hfil
    }
    \caption{$G$, the case when $|V(O_3)|=3$ and $O_1,O_2$ are maximal outerplanar graphs with order at least $4$.}
    \label{fig:G final case}
    \end{figure}
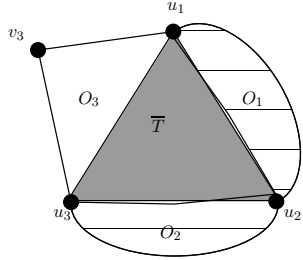

We claim that $\diam(D)\le \ceil*{\frac{n}{2}}$. Let $u,v$ be any two distinct vertices in $G$.

Suppose both $u$ and $v$ are in $V(O_i)$ where $i\in [2]$. We know that $d_D(u,v) \leq d_{D_i}(u,v)\le \frac{n_i}{2}+ 1 \le \frac{n-5}{2}+1\le \ceil*{\frac{n}{2}}$. Similarly, if $u,v\in V(\overline{T})$, then $d_D(u,v) \leq d_{D_T}(u,v)\le \frac{n_T}{2}+ 1 \le \frac{n-5}{2}+1\le \ceil*{\frac{n}{2}}$. Now we assume that one of $u,v$ is in $V(O_i)$ where $i\in [2]$, and one is in $V(\overline{T})$. Without loss of generality, let $u\in V(O_i)$ and $v\in V(\overline{T})$. Then
\begin{align*}
d_D(u,v) &\le d_D(u, u_2)+ d_D(u_2, v) \\ & =d_{D_i}(u,u_2)+d_{D_T}(u_2,v)\\
&\le \ceil*{\frac{n_i}{2}}+\ceil*{\frac{n_T}{2}}\le  \frac{n_i+n_T}{2} +1=\frac{n+3-n_{3-i}}{2}+1 \\& \le \frac{n-1}{2}+1\le \frac{n+1}{2}.
\end{align*}
This implies $d_D(u,v)\le \ceil*{\frac{n}{2}}$. Similarly we can show $d_D(v,u) \le \ceil*{\frac{n}{2}}$; moreover, similarly we can show $d_D(u,v)\le \ceil*{\frac{n}{2}}$ when one of $u,v$ is in $V(O_i)$ and one is in $V(O_{3-i})$ for each $i\in [2]$. Now suppose $v_3\in \{u,v\}$ and we may assume $u=v_3$. If $v\in O_3$, then $d_D(u,v)\leq 2 \leq \ceil{\frac{n}{2}}$. If $v\in V(O_i)$ where $i\in [2]$, then $$d_D(v_3,v)\le d_D(v_3, u_{2i-1})+ d_D(u_{2i-1}, v) \le 2+ \frac{n_i}{2} +1 = \frac{n_i}{2}+3\le \frac{n-5}{2} +3 =\frac{n+1}{2}.$$
Similarly, we can show that $d_D(v, v_3)\le \frac{n+1}{2}$ and that $\max\{ d_D(v_3, v), d_D(v, v_3)\}\le \frac{n+1}{2}$ if $v\in V(\overline{T})$. Hence for any $u,v\in V(G)$, $d_D(u,v)\le \ceil*{\frac{n}{2}}$, i.e., $\diam(D)\le \ceil*{\frac{n}{2}}$. Therefore, $\ordiam(G)\le \diam(D)\le \ceil*{\frac{n}{2}}$.

\vspace{2mm}
Case 3: Suppose $|V(O_i)|\ge 4$ for all $i\in [3]$. Then since $O_i$ is outerplanar and $v_i$ is the only degree-$2$ vertex in $O_i$, it follows that $v_i$ must have a neighbor $v_i'$ in $O_i$ such that $d_G(v_i')=3$ and $v_i'\in V(O_i)\backslash \{u_i, u_{i+1}\}$. Let $H:=G-\{ v_1, v_1', v_2, v_2', v_3, v_3'\}$. Note that $H$ is a near triangulation on $n-6$ vertices and $H$ is not outerplanar. If $H\notin \{ K_4, W_5, G_6^3\}$, we apply induction to $H$ and then $\ordiam(H)\le \ceil*{\frac{n-6}{2}}$. Thus it follows from Lemma~\ref{lem:three_deg2} that $\ordiam(G)\le \ordiam(H)+3\le \ceil*{\frac{n-6}{2}}+3 \le \ceil*{\frac{n}{2}}.$ Now we assume that $H\in \{ K_4, W_5, G_6^3\}$. Observe that $H\ne W_5$ as $\overline{T}\subseteq H$. Since $H\in \{ K_4, G_6^3\}$, there exists a unique choice of the embedding of $\overline{T}$ in $H$. It follows that there are at least two $O_i$ such that $O_i=K_4^-$ and we may assume $O_1, O_3$ are $K_4^-$. Let $H'=G -\{v_1, v_1', v_3, v_3'\}$. Observe that $u_1\in V(O_1)\cap V(O_3)$ and $u_1$ is contained in the outer cycle of $H'$, say $C'$, and $d_{H'}(u_1)=3$ and $u_1$ has a neighbor in $V(H')\setminus V(C')$. Recall that $H\in \{K_4, G_6^3\}$. If $H=K_4$, we know that $n=10$ and $H'=G_6^3$. By Lemma~\ref{lem:planartriangulation_vtx}, there exists an orientation $D'$ of $H'$ such that $\diam(D')=\ordiam(H')=4= \frac{n}{2}-1$ and $\max \{d_D(u_1, w), d_D(w, u_1) \}\le 3=\frac{n}{2}-2$ for all $w\in V(H')$. If $H=G_6^3$, $n=12$ and $H'$ is a near triangulation on $8$ vertices such that $H'$ is not outerplanar. By induction, $\ordiam(H')\le 4 = \frac{n}{2}-2$. Therefore, $H'$ has an orientation $D'$ such that $\diam(D')\le \frac{n}{2}-1$ and $\max\{ d_{D'}(u_1, w),  d_{D'}(w, u_1)\}\le \frac{n}{2}-2$.
For $i=1,3$, $O_i=K_4^-$ and $O_i$ has an orientation $D_i$ such that $\diam(D_i)=3$ and by Lemma~\ref{lem:outerplanar_vtx}, $\max \{ d_{D_i}(u_1, w),d_{D_i}( w, u_1) \}\le 2$ for every $w\in V(O_i)$. Combine $D_1, D_3, D'$ and we get an orientation $D$ of $H$. It is not hard to check $\diam(D)\le \frac{n}{2}$. Therefore, $\ordiam(G)\le \diam(D)\le \ceil*{\frac{n}{2}}$. This completes the proof.


\begin{thebibliography}{}

\bibitem{BBRV2021}
J. Babu, D. Benson, D. Rajendraprasad, and S. N. Vaka, An improvement to Chv\'atal and Thomassen’s upper bound for oriented diameter, \textit{Discrete Appl. Math.}, \textbf{304} (2021), 432--440.

\bibitem{Bau-Dankelmann2015}
S. Bau and P. Dankelmann, Diameter of orientations of graphs with given minimum degree, \textit{European J. Combin.}, \textbf{49} (2015), 126–133.


\bibitem{Chen-Chang2021}
B. Chen and A. Chang, Diameter three orientability of bipartite Graphs, \textit{Electron. J. Combin.}, \textbf{28(2)} (2021), P2.25.

\bibitem{CGT1985}
F. R. K. Chung, M. R. Garey, and R. E. Tarjan, Strongly connected orientations of mixed multigraphs, \textit{Networks}, \textbf{15(4)} (1985), 477--484.

\bibitem{Chvatal-Thomassen1978}
V. Chv\'atal and C. Thomassen, Distances in orientations of graphs, \textit{J. Combin. Theory Ser. B}, \textbf{24(1)} (1978), 61--75.

\bibitem{Cochran2023+}
G. Cochran, Large girth and small oriented diameter graphs, 
arXiv.2201.07618.

\bibitem{CCDS2021}
G. Cochran, E. Czabarka, P. Dankelmann, and L. Sz\'ekely, A size condition for diameter two orientable graphs, \textit{Graphs Combin.}, \textbf{37(2)} (2021), 527–544.

\bibitem{CDS2019}
E. Czabarka, P. Dankelmann, and L. Sz\'ekely, A degree condition for diameter two orientability of graphs, \textit{Discrete Math.}, \textbf{342(4)} (2019), 1063–1065.

\bibitem{DGS2018}
P. Dankelmann, Y. Guo, and M. Surmacs, Oriented diameter of graphs with given maximum degree, \textit{J. Graph Theory}, \textbf{88(1)} (2018), 5–17.

\bibitem{EPPT1989}
P. Erd\H{o}s, J. Pach, R. Pollack, and Z. Tuza, Radius, diameter, and minimum degree, \textit{J. Combin. Theory Ser. B}, \textbf{47(1)} (1989), 73--79.

\bibitem{FMPR2004}
F. V. Fomin, M. N. Matamala, E. Prisner, and I. Rapaport, AT-free graphs: linear bounds
for the oriented diameter, \textit{Discrete Appl. Math.}, \textbf{141(1)} (2004), 135--148.

\bibitem{FMR2004}
F. V. Fomin, M. Matamala, and I. Rapaport, Complexity of approximating the oriented diameter of chordal graphs, \textit{J. Graph Theory}, \textbf{45(4)} (2004), 255-–269.

\bibitem{Gutin1994}
G. Gutin, Minimizing and maximizing the diameter in orientations of graphs, \textit{Graphs Combin.}, \textbf{10(2-4)} (1994), 225–230.

\bibitem{GKTY2002}
G. Gutin, K. M. Koh, E. Tay, and A. Yeo, Almost minimum diameter orientations of semicomplete multipartite and extended digraphs, \textit{Graphs Combin.}, \textbf{18(3)} (2002), 499–506.

\bibitem{Gutin-Yeo2002}
G. Gutin and A. Yeo, Orientations of digraphs almost preserving diameter, \textit{Discrete Appl. Math.}, \textbf{121(1)} (2002), 129–138.



\bibitem{Huang-Ye2007}
J. Huang and D. Ye, Sharp bounds for the oriented diameters of interval graphs and 2-connected proper interval graphs, \textit{Computational Science – ICCS 2007 (Series Title: Lecture Notes in Computer Science.)}, \textbf{4489} (2007), 353--361. 

\bibitem{Koh-Ng2005}
K. M. Koh and K. L. Ng, The orientation number of two complete graphs with linkages, \textit{Discrete Math.}, \textbf{295(1)} (2005), 91–106.

\bibitem{Koh-Tay2022}
K. M. Koh and E. G. Tay, Optimal orientations of graphs and digraphs: a survey, \textit{§Graphs Combin.}, \textbf{18(4)} (2022), 745–756.



\bibitem{KRS2022}
K. Kumar, D. Rajendraprasad, and K. Sudeep, Oriented diameter of star graphs, \textit{Discrete Appl. Math.}, \textbf{319} (2022), 362--371.

\bibitem{KLW2010}
P. K. Kwok, Q. Liu, and D. B. West, Oriented diameter of graphs with diameter $3$, \textit{J. Combin. Theory Ser. B}, \textbf{100(3)} (2010), 265--274.

\bibitem{Laetsch-Kurz2012}
M. Laetsch and S. Kurz, Bounds for the minimum oriented diameter. \textit{Discrete Math. Theor. Comput. Sci.}, \textbf{14(1)} (2012), 109--142.

\bibitem{Lakshmi2011}
R. Lakshmi, Optimal orientation of the tensor product of a small diameter graph and a complete graph, \textit{Australas. J. Combin}, \textbf{50} (2011), 165–169.


\bibitem{Lakshmi-Paulraja2007}
R. Lakshmi and P. Paulraja, On optimal orientations of tensor product of complete graphs, \textit{Ars Combinatoria}, \textbf{82} (2007), 337–352.

\bibitem{Lakshmi-Paulraja2009}
R. Lakshmi and P. Paulraja, On optimal orientations of tensor product of graphs and circulant graphs, \textit{Ars Combinatoria}, \textbf{92} (2009), 271–288.

\bibitem{MPR2023}
D. Mondal, N. Parthiban, I. Rajasingh, Bounds for the Oriented Diameter of Planar Triangulations, \textit{Frontiers of Algorithmic Wisdom, IJTCS-FAW 2022}, (2023), 192-205.

\bibitem{Plesnik1985}
J. Plesn\'ik, Remarks on the diameters of orientations of graphs, \textit{Acta Math. Univ. Comenian}, \textbf{36(3)} (1985), 225–236.

\bibitem{Robbins1939}
H. E. Robbins, A theorem on graphs, with an application to a problem of traffic control, \textit{Amer. Math. Monthly}, \textbf{46(5)} (1939), 281--283.

\bibitem{Surmacs2017}
M. Surmacs, Improved bound on the oriented diameter of graphs with given minimum degree, \textit{European J. Combin.}, \textbf{59} (2017), 187--191.

\bibitem{Soltes1986}
 L. \v{S}olt\'es, Orientations of graphs minimizing the radius or the diameter, \textit{Math. Slovaca}, \textbf{36(3)} (1986), 289–296.

\bibitem{WCDGSV2021}
X. Wang, Y. Chen, P. Dankelmann, Y. Guo, M. Surmacs, and L. Volkmann. Oriented diameter of maximal outerplanar graphs, \textit{J. Graph Theory}, \textbf{98(3)} (2021), 426–444.



\end{thebibliography}
\end{document}